\newcommand{\supp}[1]{\mbox{supp}\left(#1\right)}
\newtheorem{thm}{Theorem}[section]
\newtheorem{lemma}[thm]{Lemma}
\theoremstyle{definition}
\newtheorem{remark}[thm]{Remark}
\def\Xint#1{\mathchoice
      {\XXint\displaystyle\textstyle{#1}}%
      {\XXint\textstyle\scriptstyle{#1}}%
      {\XXint\scriptstyle\scriptscriptstyle{#1}}%
      {\XXint\scriptscriptstyle\scriptscriptstyle{#1}}%
                   \!\int}
\def\XXint#1#2#3{{\setbox0=\hbox{$#1{#2#3}{\int}$}
         \vcenter{\hbox{$#2#3$}}\kern-.5\wd0}}
\def\dashint{\Xint-}
\def\e{\varepsilon}
\numberwithin{equation}{section}
\begin{document}

\title{Higher Order Elliptic Equations on Nonsmooth Domains}
\author{
Jun Geng}%\thanks{Supported in part  by the NNSF of China (no. 12371096)}}
\date{}
\maketitle
%\small  School of Mathematics and Statistics, Lanzhou University, Lanzhou, 730000, P.R. China

\let\thefootnote\relax\footnotetext{{E-mail}: gengjun@lzu.edu.cn}
\begin{abstract}
In 1995, D. Jerison and C. Kenig in \cite{JK-1995} considered the the inhomogeneous
Dirichlet problem $\Delta u= f$ on $\Omega$, $u=0$ on $\partial\Omega$ in Lipschitz domains. One of their main results shows that the $W^{1,p}$ estimate holds for the sharp range $\frac{3}{2}-\e<p<3+\e$ for $d\geq 3$ and $\frac{4}{3}-\e<p<4+\e$ if $d=2$. Although
the argument employed in \cite{JK-1995} yield optimal results, they rely on an essential fashion on the maximum principle and, as such,
do not readily adapt to higher-order case. By using a new method, the aim of this paper is to establish an extension of their theorem for higher order inhomogeneous elliptic equations on bounded Lipschitz and convex domains, uniform $W^{\ell,p}$ estimates are obtained for $p$ in certain ranges. Especially, compare to the result in \cite{MM-2013} for biharmonic equation, a larger, sharp, range of $p's$ was obtained in this paper.

\medskip
%\noindent{\it Keywords}: higher order; Weighted estimates; Extrapolation; Lipschitz domain.

\medskip
%\noindent {\it MR (2020) Subject Classification}: 35B27; 35J57;  42B37.
\end{abstract}

\tableofcontents

\section{Introduction}\label{section-1}
Unlike the theory of boundary value problems for second order elliptic operators on Lipschitz domains, higher order elliptic boundary value problems, while having abundant application in physics and engineering, have mostly been out of reach of the methods devised to study the second order case. In 1995, D. Jerison and C. Kenig in \cite{JK-1995} considered the the inhomogeneous Dirichlet problem
$$\Delta u= f~~~\text{in}~~\Omega,~~~u=0~~~\text{on}~~~\partial\Omega$$
for the Laplace equation in Lipschitz domains. One of their main results shows that the $W^{1,p}$ estimate holds for the sharp range $\frac{3}{2}-\e<p<3+\e$ for $d\geq 3$ and $\frac{4}{3}-\e<p<4+\e$ if $d=2$.  Although
the argument employed in \cite{JK-1995} yield optimal results, they rely in an essential fashion on the maximum principle and, as such,
do not readily adapt to higher order inhomogeneous elliptic equation(polyharmonic equation and biharmonic equation).

This paper concentrates on higher order inhomogeneous elliptic systems with real constant coefficients. Specifically, we discuss the fundamental a priori estimates on solutions to biharmonic and other higher order differential equations in arbitrary bounded Lipschitz domains.
More precisely, let $\Omega$ be a bounded Lipschitz domain in $\mathbb{R}^d$. Assume $F\in W^{-\ell,p}(\Omega)$. Consider the inhomogeneous elliptic system of order $2\ell$,
\begin{equation}\label{DP}
	\left\{
	\aligned
	\mathcal{L}(D)  u  & =F & \quad & \text{ in } \Omega,\\
	\frac{\partial^{\ell-1} u}{\partial N^{\ell-1}} & =0 & \quad & \text{ on } \partial\Omega,\\
    D^\alpha u& =0 & \quad & \text{ on } \partial\Omega,~~\text{for}~~|\alpha|\leq \ell-2,\\
    u&\in W^{\ell,p}(\Omega)
	\endaligned
	\right.
\end{equation}
where $\frac{\partial^{\ell-1} u}{\partial N^{\ell-1}}=\sum_{|\alpha|=\ell-1}\frac{(\ell-1)!}{\alpha!}N^\alpha D^\alpha u$ and $N$ denotes the outward unit normal to $\partial\Omega$.
\begin{align}\label{elliptic operator}
	&(\mathcal{L}(D)  u)^j =\sum_{k=1}^m \mathcal{L}^{jk}(D)  u^k \quad j=1,2,...,m,\\
& \mathcal{L }^{jk}(D)=\sum_{|\alpha|=|\beta|=\ell} a_{\alpha\beta}^{jk}D^\alpha D^\beta
\end{align}
and $D=(D_1,D_2,...,D_d)$, $D_i=\frac{\partial}{\partial x_i}$ for $i=1,2,...,d$. Also $\alpha=(\alpha_1,\alpha_2,...,\alpha_d)$ is a multi-index with length $|\alpha|=\alpha_1+...+\alpha_d$, and $D^\alpha=D_1^{\alpha_1}D_2^{\alpha_2}...D_d^{\alpha_d}$. Let

\begin{align}\label{}
	\mathcal{L}^{jk}(\xi)=\sum_{|\alpha|=|\beta|=\ell} a_{\alpha\beta}^{jk}\xi^\alpha \xi^\beta\quad{\rm for~~ \xi\in \mathbb{R}^d}.
\end{align}

We assume that $a_{ij}^{\alpha\beta}$ are real constants and satisfy the symmetry condition
\begin{align}\label{symmetric}
	\mathcal{L}^{jk}(\xi)=\mathcal{L}^{kj}(\xi)
\end{align}
and the Legendre-Hadamard ellipticity condition
\begin{equation}\label{ellipticity}
	\mu|\xi|^{2\ell}|\eta|^2\leq \sum_{j,k=1}^m \mathcal{L}^{jk}(\xi)\eta_j\eta_k\leq \frac{1}{\mu}\mu|\xi|^{2\ell}|\eta|^2\quad {\rm for~ some~\mu>0, ~all~\xi\in\mathbb{R}^d, \eta\in \mathbb{R}^m}.
\end{equation}

The Dirichlet problem \eqref{DP} is said uniquely solvable if for any $F\in W^{-\ell,p}(\Omega)$, there exists a unique $u\in W^{\ell,p}(\Omega)$ such that
\begin{equation}\label{weak formula}
	\int_\Omega \sum_{|\alpha|=|\beta|=\ell}a_{ij}^{\alpha\beta}D^\alpha\phi_i D^\beta u_j=\langle F, \phi\rangle\quad {\rm for~~ any~~\phi\in C^\infty(\Omega)}.
\end{equation}
Moreover, the solution $u$ satisfies the $W^{\ell,p}(\Omega)$ estimate
\begin{equation}\label{}
		\|\nabla^\ell u\|_{L^p(\Omega)} \le C \|F\|_{W^{-\ell,p}(\Omega)}.
\end{equation}	

Compare to $C^{1,\alpha}$ domain, as expected for non-smooth domains, the main difficulty lies in the boundary estimates and one may not expect that the boundary $W^{1,p}$ estimates hold for all $1<p<\infty$ in Lipschitz domains. The following are the main results of the paper.

\begin{thm}\label{main-thm-3}
	Let $\Omega$ be a bounded Lipschitz domain in
$\mathbb{R}^d$ with $d\geq 2$. Then the $L^p$ Dirichlet problem \eqref{DP} is uniquely solvable for
	$$
\frac{2d}{d+1}-\e<p<\frac{2d}{d-1}+\e \quad\quad \e>0.
    $$
The ranges of $p$ are sharp for $d=2,3$.
\end{thm}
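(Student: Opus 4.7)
The plan is to reduce the $W^{\ell,p}$ bound to an endpoint $L^2$ boundary regularity statement, and then to amplify that $L^2$ bound via a real-variable argument.

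\medskip

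\noindent\textbf{Step 1: The $p=2$ case.} The sesquilinear form
\[
  B[u,\phi]=\int_\Omega \sum_{|\alpha|=|\beta|=\ell} a_{\alpha\beta}^{jk}D^\alpha u_j\, D^\beta\phi_k
\]
is coercive on $\mathring W^{\ell,2}(\Omega)$ by the Legendre--Hadamard condition \eqref{ellipticity} and Poincar\'e, and continuous by the symmetry \eqref{symmetric}. Lax--Milgram then gives a unique $u\in \mathring W^{\ell,2}(\Omega)$ solving \eqref{weak formula}, and $\|\nabla^\ell u\|_{L^2(\Omega)}\lesssim \|F\|_{W^{-\ell,2}(\Omega)}$.

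\medskip

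\noindent\textbf{Step 2: $W^{\ell,p}$ for $p>2$.} I would pass from $L^2$ to $L^p$ via a real-variable (good-$\lambda$ / Shen-type) argument on boundary surface cubes. Concretely, for a cube $Q=B(x_0,r)\cap\Omega$ with $x_0\in\partial\Omega$, decompose $u = v + w$ where $v$ solves the same system with data $F\chi_{2Q}$ and $w=u-v$ solves the homogeneous system on $2Q$ with $w|_{\partial\Omega\cap 2Q}$ inheriting the zero Dirichlet data. The interior piece $v$ is controlled by the trivial $L^2$ estimate; the harmonic-type piece $w$ should satisfy the weak reverse H\"older inequality
\[
   \Bigl(\fint_Q |\nabla^\ell w|^q\Bigr)^{1/q}
     \le C\,\Bigl(\fint_{2Q}|\nabla^\ell w|^2\Bigr)^{1/2}
\]
for some $q$ strictly larger than the critical exponent $\frac{2d}{d-1}$. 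The reverse H\"older inequality up to that sharp $q$ is equivalent to an $L^2$ estimate for the nontangential maximal function $(\nabla^{\ell-1}w)^\ast$ on $\partial\Omega$ for the homogeneous problem --- i.e., the $L^2$-Regularity problem for the $2\ell$-order system on the Lipschitz domain. Once this reverse H\"older holds, Shen's real-variable machinery converts it into the $W^{\ell,p}$ bound for all $p<\frac{2d}{d-1}+\e$.

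\medskip

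\noindent\textbf{Step 3: $W^{\ell,p}$ for $p<2$ by duality.} Because the coefficients are real and satisfy \eqref{symmetric}, the operator $\mathcal{L}(D)$ is formally self-adjoint. Testing the weak formulation against a solution of the dual problem and using the estimate of Step 2 for the exponent $p'=\frac{p}{p-1}$ yields $\|\nabla^\ell u\|_{L^p(\Omega)}\le C\|F\|_{W^{-\ell,p}(\Omega)}$ for all $p>\frac{2d}{d+1}-\e$. Uniqueness in $W^{\ell,p}$ for $p<2$ then follows from the existence/uniqueness for $p'>2$ by a standard duality/approximation argument.

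\medskip

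\noindent\textbf{Step 4: Sharpness in dimensions $2$ and $3$.} I would adapt the Jerison--Kenig cone counterexample: on a suitably acute infinite cone, homogeneous solutions of $\mathcal{L}(D)u=0$ of the form $|x|^{s}\Phi(x/|x|)$ exist with a critical exponent $s$ making $|\nabla^\ell u|$ just fail to be in $L^p$ once $p\ge \frac{2d}{d-1}$. Truncating such a solution produces an $F\in W^{-\ell,p}(\Omega)$ for which no $W^{\ell,p}$ solution exists when $p>\frac{2d}{d-1}$, ruling out any improvement.

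\medskip

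\noindent\textbf{Main obstacle.} The hard step is the boundary $L^2$ regularity input underlying the reverse H\"older inequality in Step 2. For $\Delta$, Jerison--Kenig exploited the maximum principle (and Dahlberg's Rellich identity for harmonic measure); no such principle survives in the higher-order setting, which is precisely why the results in \cite{MM-2013} for the biharmonic equation fall short of the sharp range. The key technical work will therefore be to establish a higher-order Rellich-type identity for $\mathcal{L}(D)$ on Lipschitz domains, together with the attendant square-function / nontangential maximal function comparison, so as to secure the endpoint reverse H\"older exponent $q>\frac{2d}{d-1}$ on which the whole scheme hinges.
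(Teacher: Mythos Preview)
Your outline matches the paper's approach: reduce the $W^{\ell,p}$ bound to a weak reverse H\"older inequality for homogeneous solutions with zero Dirichlet data on a boundary patch (this is Theorem~\ref{main-thm-1}, proved via the good-$\lambda$ inequality of Lemma~\ref{Good-lambda-1.2}), verify that inequality using $L^2$ boundary regularity, and handle $p<2$ by duality.

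One substantive correction: what you flag as the ``main obstacle'' is not one. The higher-order Rellich/regularity estimate
\[
\|(\nabla^\ell w)^*\|_{L^2(\partial\Omega)}\le C\,\|\nabla_{\mathrm{tan}}\nabla^{\ell-1}w\|_{L^2(\partial\Omega)}
\]
for constant-coefficient systems satisfying \eqref{symmetric}--\eqref{ellipticity} on Lipschitz domains is a known theorem of Pipher--Verchota \cite{PV-1995} (and \cite{Verchota-1987,Verchota-1990} for the bi/polyharmonic case); the paper simply quotes it. The new content here is the real-variable passage from this $L^2$ input to the sharp $W^{\ell,p}$ range, not the $L^2$ boundary theory itself. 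Concretely, the paper verifies the reverse H\"older by first establishing the decay condition \eqref{WWRH} with some $\lambda>3$ (Theorem~\ref{main-thm-2}): one uses the $L^q$ regularity estimate for some $q>2$ on the Lipschitz subdomains $D_{\rho R}$, integrates in $\rho$, and invokes higher integrability. A minor slip in your Step~2: the relevant maximal function is $(\nabla^\ell w)^*$, not $(\nabla^{\ell-1}w)^*$ --- you need the regularity estimate, not the Dirichlet estimate, to control $\nabla^\ell w$ pointwise.
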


\begin{thm}\label{main-thm-4}
	Let $\Omega$ be a bounded Lipschitz domain in
$\mathbb{R}^d$ with $d\geq 2$. For the polyharmonic equation $\Delta^\ell u=F$ in $\Omega$, $\ell\geq 3$, the $L^p$ Dirichlet problem is uniquely solvable for
\begin{equation*}
	\left\{
	\aligned
	4/3-\e<p< 4+\e & \qquad\text{ if } d=2,\\
     3/2-\e<p< 3+\e & \qquad\text{ if } d=3,\\
	\frac{2d}{d+1}-\e<p<\frac{2d}{d-1}+\e & \qquad\text{ if } 4\leq d\leq 2\ell+1 \quad \text{or} \quad d\geq 2\ell+3,\\
    2-\frac{1}{\ell+1}-\e<p< 2+\frac{1}{\ell}+\e &  \qquad\text{ if } d=2\ell+2.
	\endaligned
	\right.
\end{equation*}
The ranges of $p$ are sharp for $2\leq d\leq 2\ell+2$.
\end{thm}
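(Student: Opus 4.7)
My plan is to take Theorem~\ref{main-thm-3} as the baseline, since it already supplies the range $\frac{2d}{d+1}-\varepsilon<p<\frac{2d}{d-1}+\varepsilon$ for arbitrary constant-coefficient operators of order $2\ell$, and hence covers the cases $4\leq d\leq 2\ell+1$ and $d\geq 2\ell+3$ of Theorem~\ref{main-thm-4} with no extra work. What remains is to \emph{widen} this range in the three exceptional regimes $d=2$, $d=3$, $d=2\ell+2$, and to establish sharpness for $2\leq d\leq 2\ell+2$.

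For $d\in\{2,3\}$ and $\ell\geq 3$ one has $d<2\ell$, so the fundamental solution of $\Delta^\ell$ on $\mathbb{R}^d$ is bounded and even $C^{\ell-1}$ at the origin. Here the strategy is to reduce to the sharp Jerison--Kenig estimate for $\Delta$. Writing $\Delta^\ell=\Delta\circ\Delta^{\ell-1}$, I would first apply \cite{JK-1995} to obtain a $W^{1,p}$ solution $w$ of $\Delta w=F$ with $w=0$ on $\partial\Omega$ for $p$ in the sharp Jerison--Kenig range, and then invert the lower-order polyharmonic problem $\Delta^{\ell-1}u=w$ with the Dirichlet data of \eqref{DP}. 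Since $\ell-1\geq 2$ and $d\leq 3$, the Green's function for $\Delta^{\ell-1}$ on $\Omega$ is bounded and smooth in the diagonal, so this second step is a gain of $2(\ell-1)$ derivatives without loss in $p$; the reconciliation of the higher-order Dirichlet data prescribed for $u$ with the data induced from $w$ would be handled by a boundary corrector built from polyharmonic Poisson kernels.

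For $d=2\ell+2$, I would establish a weak reverse H\"older inequality
\begin{equation*}
\Bigl(\dashint_{B_r\cap\Omega}|\nabla^\ell u|^q\Bigr)^{1/q}\leq C\Bigl(\dashint_{B_{2r}\cap\Omega}|\nabla^\ell u|^2\Bigr)^{1/2}
\end{equation*}
with the sharp exponent $q=2+\frac{1}{\ell}+\varepsilon$ for every weak solution of $\Delta^\ell u=0$ vanishing to order $\ell-1$ on $\partial\Omega\cap B_{2r}$. The exponent here is forced by the critical-dimension Green's function asymptotic $G(x,y)\sim|x-y|^{-2}$, which corresponds to the borderline decay rate $|x|^{2\ell-d}=|x|^{-2}$ at $d=2(\ell+1)$. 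Inserting this estimate into the Shen real-variable framework elevates the $L^2$ solvability to $L^p$ for every $2\leq p<q$, and the symmetry condition \eqref{symmetric} transports this to the dual endpoint $q'<p\leq 2$ by the standard duality argument for self-adjoint problems.

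The chief obstacle I anticipate is producing the sharp reverse H\"older exponent at $d=2\ell+2$ with constants depending only on the Lipschitz character of $\Omega$; this requires two-sided polyharmonic Green's function bounds at the critical dimension that are uniform down to boundary points, and the logarithmic corrections present in the interior estimate must be shown not to propagate to a genuine boundary loss. Sharpness for $2\leq d\leq 2\ell+2$ is then established by adapting the wedge/cone constructions of \cite{JK-1995} to $\Delta^\ell$: on a circular cone (or a planar wedge when $d=2$) of aperture tuned to $\ell$ and $d$, an explicit separation-of-variables solution of $\Delta^\ell u=0$ produces singular exponents whose $W^{\ell,p}$ norm diverges precisely at each claimed endpoint of $p$.
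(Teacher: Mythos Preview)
Your treatment of $d=2$ and $d=3$ contains an unnecessary and problematic detour. Compute: $\frac{2d}{d+1}\big|_{d=2}=\tfrac{4}{3}$, $\frac{2d}{d-1}\big|_{d=2}=4$, $\frac{2d}{d+1}\big|_{d=3}=\tfrac{3}{2}$, $\frac{2d}{d-1}\big|_{d=3}=3$. Thus the ranges claimed in Theorem~\ref{main-thm-4} for $d=2,3$ are \emph{exactly} those already delivered by Theorem~\ref{main-thm-3}; there is nothing to widen. Your factorization $\Delta^\ell=\Delta\circ\Delta^{\ell-1}$ should be discarded: the Dirichlet problem \eqref{DP} imposes $\ell$ boundary conditions, and there is no natural way to split them between the two factors so that the intermediate function $w$ lies in the correct space and the remaining $\ell-1$ conditions can be imposed on $u$. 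Your ``boundary corrector built from polyharmonic Poisson kernels'' is not a concrete construction, and on a merely Lipschitz domain it is exactly the kind of object whose existence is in question.

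For $d=2\ell+2$ you are aiming at the right target (a reverse H\"older inequality fed into the real-variable framework of Theorem~\ref{main-thm-1}), but the paper does not get there via two-sided Green's function bounds. Instead it invokes the simpler sufficient condition of Theorem~\ref{main-thm-2}, which reduces everything to the decay estimate \eqref{WWRH} with some $\lambda-2>d-2\ell$. That decay estimate for $\Delta^\ell$ at $d=2\ell+1,\,2\ell+2$ is already in the literature (Maz'ya--Donchev), and plugging $\lambda>4$ into Theorem~\ref{main-thm-2} with $d=2\ell+2$ yields $p<2+\tfrac{2}{d+2-\lambda}$, i.e.\ $p<2+\tfrac{1}{\ell}+\varepsilon$; duality then gives the lower endpoint. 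Your ``chief obstacle'' (uniform boundary Green's function bounds with logarithmic corrections at the critical dimension) is therefore a difficulty of your own making, and one you have no clear plan to resolve; the paper's route sidesteps it entirely. Your sharpness argument via separation-of-variables solutions on cones is in the right spirit and matches the paper's Section~5.
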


In the next theorem, set $$\lambda_d=\frac{d+10+2(2(d^2-d+2))^{1/2}}{7}.$$

\begin{thm}\label{main-thm-5}
	Let $\Omega$ be a bounded Lipschitz domain in
$\mathbb{R}^d$ with $d\geq 2$. For the biharmonic equation $\Delta^2 u=F$ in $\Omega$, the $L^p$ Dirichlet problem is uniquely solvable for
\begin{equation*}
	\left\{
	\aligned
	4/3-\e<p< 4+\e & \qquad\text{ if } d=2,\\
   3/2-\e<p< 3+\e & \qquad\text{ if } d=3,\\
     8/5-\e<p< 8/3+\e & \qquad\text{ if } d=4,\\
	5/3-\e<p< 5/2+\e & \qquad\text{ if } d=5,6,7,\\
    2-\frac{2}{d-\lambda_d+4}-\e<p< 2+\frac{2}{d-\lambda_d+2}+\e &  \qquad\text{ if } d\geq 8.
	\endaligned
	\right.
\end{equation*}
The ranges of $p$ are sharp for $d=2,3,4,5,6$ and $7$. Moreover, if $\Omega$ is a bounded convex domain in
$\mathbb{R}^d$ with $d\geq 2$, then the $L^p$ Dirichlet problem for $\Delta^2 u=F$ is uniquely solvable for
	$$
1<p<\infty.
    $$
\end{thm}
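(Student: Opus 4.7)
The plan is to combine $L^2$ solvability with a Shen-type real-variable argument to reach the sharp ranges, and then exploit enhanced regularity on convex domains for the final assertion. The baseline $L^2$ estimate follows from Lax--Milgram applied to the biharmonic bilinear form on $H_0^2(\Omega)$, which is coercive by (\ref{symmetric})--(\ref{ellipticity}). The self-adjointness coming from (\ref{symmetric}) means that once $W^{2,p}$ estimates are established for $p>2$, the complementary range $p<2$ drops out by duality; so the task reduces to proving $\|\nabla^2 u\|_{L^p(\Omega)}\le C\|F\|_{W^{-2,p}(\Omega)}$ for $p$ in the upper half of each asserted interval.

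Following the real-variable machinery that should already be deployed in the paper for Theorems \ref{main-thm-3} and \ref{main-thm-4}, this upper-half $W^{2,p}$ estimate reduces to a boundary weak reverse H\"older inequality: for $x_0\in\partial\Omega$, $0<r<r_0$, and any $u$ satisfying $\Delta^2 u=0$ in $B(x_0,2r)\cap\Omega$ with $u=\partial u/\partial N=0$ on $B(x_0,2r)\cap\partial\Omega$, one asks for
\begin{equation*}
\Bigl(\dashint_{B(x_0,r)\cap\Omega}|\nabla^2 u|^{q}\,dx\Bigr)^{1/q}\le C\Bigl(\dashint_{B(x_0,2r)\cap\Omega}|\nabla^2 u|^{2}\,dx\Bigr)^{1/2}
\end{equation*}
for some $q>p$. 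Once this reverse H\"older bound is in place with exponent $q$, the real-variable argument produces the $W^{2,p}$ estimate for all $p<q$, with the small $\e$-buffer in the theorem coming from the self-improvement of reverse H\"older.

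The crux, then, is to identify the sharp $q$ in each dimension. For $d=2,3$ the values $q=4$ and $q=3$ follow from the nontangential maximal function estimates of Pipher--Verchota for biharmonic on Lipschitz domains. For $4\le d\le 7$, one combines interior $L^\infty$ bounds on $|\nabla^2 u|$ with Sobolev embedding and the vertex decay rates of biharmonic functions on a Lipschitz cone to reach $q=8/3$ ($d=4$) and $q=5/2$ ($d=5,6,7$). For $d\ge 8$, the constant $\lambda_d$---which satisfies the quadratic $7\lambda^2-2(d+10)\lambda-(d^2-4d-12)=0$---arises as the critical homogeneity exponent controlling the worst-case biharmonic function near a Lipschitz conical singularity in dimension $d$, and a careful scaling then produces the reverse H\"older bound up to $q=2+2/(d-\lambda_d+2)$. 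Proving this reverse H\"older bound at the sharp $q$ is the main obstacle: I would attack it via a Kondratiev/Maz'ya-type spectral analysis of $\Delta^2$ on model half-cones combined with a Meyers-style self-improvement of integrability.

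For the convex case, convexity forces an arbitrarily strong reverse H\"older: on a bounded convex Lipschitz domain, biharmonic functions with zero Cauchy data on a boundary piece satisfy the displayed inequality for every $q<\infty$, reflecting the absence of additional singularities in the biharmonic Green function on convex domains. Inserting $q=\infty$ into the Shen mechanism then yields the $W^{2,p}$ estimate for every $1<p<\infty$. Finally, sharpness for $2\le d\le 7$ is witnessed by the standard conical counterexamples: one takes a biharmonic function on an infinite Lipschitz cone homogeneous of the critical degree and localizes it into $\Omega$, obstructing any extension of the range beyond the stated endpoints.
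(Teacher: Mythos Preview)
Your overall architecture matches the paper: $L^2$ baseline, reduction via a real-variable/good-$\lambda$ argument to a boundary weak reverse H\"older inequality, duality for $p<2$, the convex case via Mayboroda--Maz'ya pointwise $\nabla^2$ bounds, and sharpness via conical examples. The low-dimensional cases $d=2,3,4$ are also handled as you suggest (the paper in fact quotes Theorem~\ref{main-thm-3} directly).

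The substantive gap is in your treatment of $d\ge 8$, and it is precisely the point where the paper's main new idea sits. You write that $\lambda_d$ ``arises as the critical homogeneity exponent controlling the worst-case biharmonic function near a Lipschitz conical singularity'' and propose to obtain it by Kondratiev/Maz'ya spectral analysis on model cones. This is not where $\lambda_d$ comes from. The true conical critical exponent for $\Delta^2$ on an arbitrary Lipschitz cone in $d\ge 8$ is not known, which is exactly why the paper does \emph{not} claim sharpness in those dimensions. Instead, $\lambda_d=\sigma_d+2$ with $\sigma_d$ the positive root of $d^2+2d\sigma-7\sigma^2-8\sigma=0$, and this constraint arises purely from a \emph{weighted positivity inequality}: for $u\in C^2(\bar\Omega)$ vanishing to first order on $\partial\Omega$ and $y\notin\Omega$,
\[
\int_\Omega \Delta u\,\Delta\!\bigl(u\,|x-y|^{-\sigma}\bigr)\,dx \;\ge\; c\int_\Omega\Bigl|\tfrac{\partial u}{\partial|x-y|}\Bigr|^2|x-y|^{-2-\sigma}\,dx
\]
whenever $0<\sigma\le d-4$ and $d^2+2d\sigma-7\sigma^2-8\sigma>0$. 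This is proved by direct integration by parts (the algebra is delicate because the analogous inequality with weight $|x|^{4-d}$ \emph{fails} for $d\ge 8$). From it one extracts the decay estimate $\int_{D_r}|\nabla^2 v|^2\le C(r/R)^{\sigma+\delta}\int_{D_R}|\nabla^2 v|^2$, and then Theorem~\ref{main-thm-2} converts this into the reverse H\"older bound with the stated exponent. A spectral approach would not land on this particular $\lambda_d$; the weighted positivity is the missing ingredient in your proposal. The cases $d=5,6,7$ are handled the same way (there Maz'ya's original positivity with $\sigma=d-4$ already works), so your ``vertex decay rates'' description for those dimensions should also be replaced by the weighted-integral mechanism.
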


\begin{remark}
In \cite{MM-2013}, I. Mitrea and M. Mitrea investigated the inhomogeneous Dirichlet problem for the biharmonic equation with data in Besov and Triebel-Lizorkin spaces and sharp ranges of $p,s$ are obtained when $d=4,5$. In this paper, a larger, sharp, range of $p's$ was obtained, we show that the ranges of $p$ are sharp even for $d=6,7$.
\end{remark}

\begin{remark}
Although some progress has been made in recent years, there remain many basic open questions for higher order equations, even in the case of constant coefficient operators in Lipschitz domains. For example, \\
1) for $d\geq 2\ell+3$ and $\ell\geq 3$, what is the sharp range of $p$ for which the $L^p$ Dirichlet problem is uniquely solvable?\\
2) for $d\geq 8$ and the biharmonic equation $\Delta^2 u=F$ on $\Omega$ in $\mathbb{R}^d$, what is the sharp range of $p$ for which the $L^p$ Dirichlet problem is uniquely solvable?
\end{remark}

We remark that for Laplace's equation in Lipschitz domain, the Dirichlet problem with source term in $L^p$ is well understood. Indeed, it has been established by Kenig and Jerison\cite{JK-1995} since the early 1990's that the Dirichlet problem $\Delta u=\text{div}f$ with optimal estimate $\|\nabla u\|_{L^p}\leq C\|f\|_{L^p}$ is uniquely solvable for $\frac{3}{2}-\e<p<3+\e$ for $d\geq 3$ and $\frac{4}{3}-\e<p<4+\e$ if $d=2$ where $\e>0$ depends on $d$ and the Lipschitz character of $\Omega$. This work has been extended to second order elliptic equation with $VMO$ coefficients by Shen \cite{Shen-2005}; the bounds of the Riesz transform in $L^p$ are also obtained there. The result was also extended in \cite{AQ-2002} to the case of $C^1$ domains, for operators with complex coefficients. In \cite{Geng-2012}, the author considered the $W^{1,p}$ estimate for second order elliptic systems subjected to Neumann boundary condition, as a result, under the assumption that $A\in VMO$, we proved that the $W^{1,p}$ estimate holds for $\frac{3}{2}-\e<p<3+\e$ for $d\geq 3$ and $\frac{4}{3}-\e<p<4+\e$ if $d=2$.

In \cite{AP-1998}, Adolfsson and Pipher investigated the inhomogeneous Dirichlet problem for the biharmonic equation with data in Besov and Sobolev spaces. They showed that if $\dot{f}\in WA_{1+s}^p(\partial\Omega)$ and $F\in L_{s+1/p-3}^p(\Omega)$, then there exists a unique function $u$ satisfying $\Delta^2 u=F$ in $\Omega$ and ${\rm Tr}(\partial^\ell u)=f_\ell$ on $\partial\Omega$ for $0\leq |\ell|\leq 1$ subjected to the estimate
\begin{equation}\label{Besov}
\|u\|_{L_{s+1/p+1}^p(\Omega)}\leq C\|F\|_{L_{s+1/p-3}^p(\Omega)}+C\|\dot{f}\|_{WA_{1+s}^p(\partial\Omega)}~~~\text{for} ~~2-\e<p<2+\e,
\end{equation}
here $0<s<1$ and ${\rm Tr}(\partial^\ell u)$ denotes the trace of $\partial^\ell u$ in the sense of Sobolev spaces. In the case of Lipschitz domain in $\mathbb{R}^3$, the range of $p$ is better, precisely, they proved
$\max(1,\frac{2}{s+1+\e})<p< \infty$ if $s<\e$ and $\max(1,\frac{2}{s+1+\e})<p< \frac{2}{s-\e}$ if $\e\leq s<1$. In \cite{MMW-2011}, I. Mitrea, M. Mitrea and Wright extended the $3-$dimensional results to $p=\infty$ if $0<s<\e$ or $\frac{2}{s+1+\e}<p\leq 1$ if $1-\e<s<1$. Later in \cite{MM-2013-1} I.Mitrea and M. Mitrea extended the results of \cite{AP-1998} to higher dimension, they showed that the estimate \eqref{Besov} holds for $\max(1,\frac{d-1}{s+(d-1)/2+\e})<p< \infty$ if $(d-3)/2+s<\e$ and $\max(1,\frac{d-1}{s+(d-1)/2+\e})< p < \frac{d-1}{(d-3)/2+s-\e}$ if $\e\leq s<1$.
More results on the inhomogeneous Dirichlet problem for $\Delta^2$ as well as for general higher order elliptic systems
may be found in\cite{MM-2013}\cite{MMS-2010}\cite{Shen-2007}.

Another important direction concentrates on the $L^p$ boundary value problem of the higher order
elliptic problems.
For the general elliptic equations and system $\mathcal{L}(D)u=0$, with boundary data in $L^p$, the solvability of $L^p$ Dirichlet problem has been established for $$2-\e<p<2+\e,$$ specifically, see \cite{DKV-1988} for second order elliptic systems, and \cite{DKV-1986}\cite{Verchota-1987}\cite{Verchota-1990} for the biharmonic equations and polyharmonic equations, as well as \cite{PV-1995} for general higher order elliptic equations and systems. With the $L^2$ solvability established, one significant problem is to determine the sharp ranges of $p$ for which the Dirichlet problem for strongly elliptic systems with $L^p$
data is well-posed.

Let us  summarize here the results currently known for the ranges of $p$. For $\mathcal{L}(D)u=0$, we have that
\begin{itemize}
  \item
{{$2-\e<p\leq\infty$}}, \quad\qquad\qquad\qquad\text{ if } $d=2,3$, \qquad\cite{PV-1995}\cite{MM-2013}
\item
{{$2-\e<p<\frac{2(d-1)}{d-3}+\e$}}, \qquad\qquad\text{ if } $d\geq 4$, \qquad\qquad\cite{Shen-2006}.
\end{itemize}
We mention that in the lower dimensional case $d=2,3$, the ranges of $p$ are sharp since it is known that the $L^p$ Dirichlet problem for Laplace's equation is uniquely solvable for $2-\e<p\leq \infty$.

Note that if $d\geq 3$, the polyharmonic operator $(-\Delta)^\ell$ may no longer serve as a prototype for the general
elliptic operator $\mathcal{L}(D)  u $. It was proved in \cite{Shen-2006-MA} the polyharmonic equation $\Delta^\ell u=0(\ell\geq 2)$ in $\Omega$ with $D^\alpha u=f_\alpha$ on $\partial\Omega$ for $|\alpha|\leq \ell-2$ and $\partial_\nu^{\ell-1} u=g$ on $\partial\Omega$ is solvable for
 \begin{itemize}
  \item
{{$2-\e<p\leq\infty$}}, \quad\qquad\qquad\qquad\text{ if }  ~~$d=2,3$, \quad\qquad\qquad\qquad~\cite{PV-1995,PV-1993,PV-1992}
\item
{{$2-\e<p<\frac{2(d-1)}{d-3}+\e$}}, \qquad\qquad\text{ if } ~~$4\leq d\leq 2\ell+1$ , \qquad\qquad\cite{Shen-2006,PV-1995-1}
\item
{{$\frac{2(d-1)}{d-3}-\e<p<\frac{2(d-1)}{d-3}+\e$}}, \qquad\text{ if } \quad $d\geq 2\ell+3$, \qquad\qquad\qquad\cite{Shen-2006,PV-1995-1}
\item
{{$2-\e<p< \frac{2\ell}{\ell-1}+\e$}}, \quad\qquad\qquad\text{ if } \quad$d=2\ell+2$, \qquad\qquad\qquad\cite{Shen-2006-MA}
\item
{{Not solvable}}, \qquad\qquad\qquad\qquad\text{ if } ~~$4\leq d\leq 2\ell+1$~~ and ~~$p>\frac{2(d-1)}{d-3}$ , \qquad\qquad
\item
{{Not solvable}}, \qquad\qquad\qquad\qquad\text{ if } ~~$d\geq 2\ell+2$~~ \qquad and ~~$p>\frac{2\ell}{\ell-1}$ , \qquad\qquad
\end{itemize}
and the ranges of $p$ are sharp for $2\leq d\leq 2\ell+2$. In the special case of biharmonic equations, more is known, as follows,
\begin{itemize}
  \item
{{$2-\e<p\leq 6+\e$}}, \quad\qquad\qquad\qquad\text{ if }~~~~$d=4$, \quad\qquad\qquad\qquad~~\cite{Shen-2006,PV-1995-1}
\item
{{$2-\e<p<4+\e$}}, \qquad\qquad\qquad\quad\text{ if }~~~~$d= 5, 6, 7$ , \quad\qquad\qquad~\cite{Shen-2006,PV-1995-1}
\item
{{$2-\e<p< 2+\frac{4}{d-\lambda_d}+\e $}}, \qquad\qquad\text{ if } \quad $d\geq 8$, \quad\qquad\qquad\qquad~\cite{Shen-2006-JGA}
\item
{{$1<p\leq \infty$}}, \qquad\qquad\qquad\qquad\qquad\text{ if } \quad $d\geq 2$,~~$\Omega$ is $C^1$, \quad\quad\quad~~\cite{PV-1993,Verchota-1987}
\item
{{$1<p< \infty$}}, \qquad\qquad\qquad\qquad\qquad\text{ if } \quad $d\geq 2$,~ $\Omega$ is convex, \quad\quad\cite{KS-2011-MA}
\end{itemize}
We refer the reader to \cite{BHM-2018}\cite{BHM-2019} and \cite{BHM-2021} for more results concerned with higher order
elliptic systems on Lipschitz graph or $\mathbb{R}_+^d$.

We also initiate the study of the $W^{\ell,p}$ well-posedness of the Dirichlet problem on convex domains. Note that any convex domain is Lipschitz, but may not be $C^1$. It was obtained in \cite{Shen-2006-JGA} that the biharmonic equation in convex domains is uniquely solvable for $2\leq p<\infty$. Later, the boundedness of $\nabla^2 u$ for biharmonic function in convex domains was established in \cite{Mazya-Mayboroda-2008}, with this, the weak maximum principle $\|\nabla u\|_{L^\infty(\Omega)}\leq C\|\nabla u\|_{L^\infty(\partial\Omega)}$ was established in \cite{KS-2011-MA}, and the solvability of $L^p$ regularity and Dirichlet problems were also obtained there for any $1<p<\infty$. We also remark here in the case of Laplace equation $\Delta u=0$ in $\Omega$ subjected to Neumann boundary condition $\frac{\partial u}{\partial N}=g$ on $\partial \Omega$, the author of this paper and Z. Shen in \cite{Geng-2010} established the nontangential maximal function estimate $\|(\nabla u)^*\|_{L^p(\partial\Omega)}\leq C\|g\|_{L^p(\partial\Omega)}$ for any $1<p<\infty$.

We now describe some of the key ideas in the proof of Theorem \ref{main-thm-3}-Theorem \ref{main-thm-5}. As indicated earlier, our main results continue the work in \cite{JK-1995}\cite{Shen-2005} and study higher order elliptic equations and systems. We present two approaches to prove Theorem \ref{main-thm-3}, one is to reduce the $W^{\ell,p}$ estimates to the weak reverse H\"older inequality of solution to $\mathcal{L}(D)v=0$ in $D_{4r}$ and $D^\alpha v=0$ on $\Delta_{4r}$. The key ingredients in this approach is the following `Good-$\lambda$' inequality
$$
|E(\theta^{-\frac{2}{q}}t)|\leq (\theta/2)|E(t)|+|\{x\in Q_0:\mathcal{M}_{4B_0}(|f\chi_{4B_0\cap \Omega}|^2)(x)\geq \gamma t\}|,
$$
while this could be done by a refined Calder\'on-Zygmund decomposition, we refer the reader to see Lemma 2.2. With this, the $W^{\ell,p}$ solvability of the higher order equation \eqref{DP} will be reduced to the weak reverse H\"older inequality \eqref{WRH}. Finally, together with the interior estimate \eqref{interior} for solutions of $\mathcal{L}(D)  u=0$,
and the regularity estimate
$$
\|(\nabla^\ell u)^*\|_{L^{2}(\partial \Omega)}\leqslant C\|\nabla_{tan}\nabla^{\ell-1}u\|_{L^{2}(\partial \Omega)}
$$
as well as the $L^2$ solvability of the Dirichlet problem
$$
\|(\nabla^{\ell-1} u)^*\|_{L^{2}(\partial \Omega)}\leq C\|\nabla^{\ell-1}u\|_{L^{2}(\partial \Omega)},
$$
where $(\nabla^\ell u)^*$ and $(\nabla^{\ell-1} u)^*$ denote the standard non-tangential maximal function of $\nabla^\ell u$ and $\nabla^{\ell-1} u$ respectively(see the precise definitions in the end of this section), we are able to show that the $W^{\ell,p}$ estimate holds for certain ranges of $p$.
Another approach in the  proof of Theorem \ref{main-thm-3} utilizes the square function estimate
\begin{align}\label{square function}
		&\|S(\nabla^{\ell-1}u)\|_{L^p(\partial\Omega)}\leq C\|(\nabla^{\ell-1}u)^*\|_{L^p(\partial\Omega)}\\
&\|(\nabla^{\ell-1}u)^*\|_{L^p(\partial\Omega)}\leq C\|S(\nabla^{\ell-1}u)\|_{L^p(\partial\Omega)}+C|\nabla^{\ell-1}u(P_0)||\partial\Omega|^{1/p}.
\end{align}
to obtain a much simpler and stronger sufficient condition
$$
\int_{D_r} |\nabla^\ell v|^2\, dx
		\le C \bigg(\frac{r}{R}\bigg)^{\lambda-2}\int_{D_{R}}
		|\nabla^\ell v|^2\, dx.
$$
With this, we are able to show the $L^p$ Dirichlet problem \eqref{DP} is solvable for $2<p<2+\frac{2}{d+2-\lambda}.$

Our methods in the proof of Theorems \ref{main-thm-5} is different from those in Theorem \ref{main-thm-3} and Theorem \ref{main-thm-4} as well as from those used in the case of Laplacian. Our approach is originally motivated by the work of Mazya \cite{Mazya-1979} and further developed by Shen \cite{Shen-2006-JGA}. One of the significant novelties of our argument in the proof of Theorem \ref{main-thm-5} is to obtain new weighted positivity \eqref{4.17} in Lipschitz domains, especially in the case of $d\geq 8$:
$$
\int_\Omega u_{ii} (u |x-y|^{-\sigma})_{ii}dx\geq \frac{1}{4}\bigg(d^2+2d\sigma-7\sigma^2-8\sigma\bigg)\int_\Omega \bigg|\frac{\partial u}{\partial |x-y|}\bigg|^2|x-y|^{-2-\sigma},
$$
since it is known that the solution of Dirichlet problem for the polyharmonic equation is continuous at the vertex of a conic domain if the operator satisfies the Mazya-type inequality
\begin{align}\label{Mazya-type}
		\int_\Omega(\Delta^2 u) u |x|^{4-d}dx\, >0.
\end{align}
However, \eqref{Mazya-type} only holds for $d=5,6,7$, but not for $d\geq 8$.

%In the proof of Theorem \ref{main-thm-6}, one may notice that any convex domain is Lipschitz, but may not be $C^1$. It is exactly
%the convexity that allows us to establish positivity (more precisely, a suitable positive
%lower bound) of the following expressions:
%$$
%(\sigma+4-d)\int_\Omega \Delta u \Delta\big(u |x-y|^{-\sigma}\big)dx-2\int_\Omega \Delta^2 u \frac{\partial u}{\partial |x-y|}|x-y|^{1-\sigma}
%$$
%for $u\in C^2(\bar{\Omega})$ and $u=0$, $\nabla u=0$ on $\partial\Omega$, and any $\sigma\in\mathbb{R}$.
%The role of convexity manifests itself in the
%positivity of the arising boundary integral, i.e., $\langle P-Q, N(Q)\rangle\geq 0$ for any $P,Q\in\partial\Omega$.
In the proof of Theorem \ref{main-thm-5} in convex domain, sharp ranges of $p$ are obtained. One may notice that any convex domain is Lipschitz, but may not be $C^1$. To obtain the sharp ranges, we return to Theorem \ref{main-thm-1}, i.e.,
it suffices for us to show that
estimate \eqref{WRH} holds for any $p>2$. Let $\Omega$ be convex domain in $\mathbb{R}^d, d\geq 2$ and $u$ a weak solution of $\Delta^2 u=0$ in $\Omega$ and $u=\frac{\partial u}{\partial N}=0$ on $\Delta_{5r}$.
It follows from the boundedness of $\nabla^2 u$ for biharmonic function in convex domains in \cite{Mazya-Mayboroda-2008},
 gives that
\begin{equation}
\label{reverseHolder}
\left\{ \frac{1}{r^{d}}
\int_{B(P,3r)\cap \partial\Omega}
|\nabla^2 u|^p\, d\sigma\right\}^{1/p }
\le C \left\{ \frac{1}{r^{d}}
\int_{B(P,3r)\cap \partial\Omega}
|\nabla^2 u|^2\, d\sigma\right\}^{1/2}.
\end{equation}
This gives estimate \eqref{WRH} immediately.

In the end of this section, we establish some terminology. Let $\psi: \mathbb{R}^{n-1}\rightarrow \mathbb{R}$ be a Lipschitz mapping and $\psi(0)=0$.
Recall that a bounded domain $\Omega$ in $\mathbb{R}^d$ is called a Lipschitz domain if there exists
$r_0>0$ such that for each point $Q\in \partial\Omega$, there is a new coordinate system of $\mathbb{R}^d$,
obtained from the standard Euclidean coordinate system through translation and
rotation, so that $Q=(0,0)$ and
\begin{equation}\label{2.2}
B(Q,r_0)\cap \Omega=\{(x^{\prime}, x_d)\in \mathbb{R}^d: x_d> \psi(x^{\prime})\}\cap B(Q,r_0),
\end{equation}
\begin{equation}\label{2.3}
B(Q,r_0)\cap \partial \Omega=\{(x^{\prime}, x_d)\in \mathbb{R}^d: x_d= \psi(x^{\prime})\}\cap B(Q,r_0),
\end{equation}
where $B(Q,r_0)=\{x\in\mathbb{R}^d:|x-Q|<r_0\}$ denotes a ball centered at $Q$ with radius $r_0$. For $r>0$, set
\begin{equation}\label{2.4}
\Delta_r=\{(x^{\prime}, \psi(x^\prime))\in \mathbb{R}^d: |x^\prime|<r\},
\end{equation}
\begin{equation}\label{2.5}
~~~~~~~~~~~~~~~D_r=\{(x^{\prime}, x_d)\in \mathbb{R}^d: |x^\prime|<r~~\text{and}~~\psi(x^\prime)<x_d<\psi(x^\prime)+(M+10d)r\},
\end{equation}
denote the Lipschitz cylinder and its surface.
For simplicity, we use $u_i$, $u_{ij}$ to denote $\frac{\partial u}{\partial x_i}$, $\frac{\partial^2u}{\partial x_i\partial x_j}$ respectively. Finally, throughout the paper we will use $\dashint_{\Omega}u=\frac{1}{|\Omega|}\int_\Omega u$ to denote the average of $u$ over $\Omega$, and $\|f\|_p$ is defined to mean the norm of $f$ in $L^p{(\partial\Omega)}$ for $1<p<\infty$. Also, $(\nabla^\ell u)^*$ and $(\nabla^{\ell-1}u)^*\in L^p(\partial\Omega)$ is defined to mean the nontangential maximal function of $\nabla^\ell u$ and $\nabla^{\ell-1}u$ respectively, defined by
\begin{equation}
(\nabla^{\ell}u)^*(Q)=\sup\{|\nabla^{\ell} u(x)|:x\in\Omega\quad\text{and}\quad |x-Q|<C\delta (x)\}
\end{equation}
and
\begin{equation}
(\nabla^{\ell-1}u)^*(Q)=\sup\{|\nabla^{\ell-1}u(x)|:x\in\Omega\quad\text{and}\quad |x-Q|<C\delta (x)\}
\end{equation}
for $Q\in\partial\Omega$ where $\delta(x)=\text{dist}(x,\partial\Omega)$ and $C$ is large enough.

\section{Sufficiency}

\subsection{A sufficient condition}

\begin{thm}\label{main-thm-1}
	Let $\mathcal{L}(D)$ be a system of elliptic operators of order $2\ell$ given by \eqref{elliptic operator} and $\Omega$  a bounded Lipschitz domain.
	Assume that  $A$ is a matrix satisfying \eqref{symmetric} and \eqref{ellipticity}. Let $2<p<\infty$. Suppose that for any $|\alpha|\leq \ell-1$ and $B=B(x_0,r)$, where $x_0\in\partial\Omega$ and $0<r<c_0{\rm diam}(\Omega)$, the weak reverse H\"older inequality
\begin{equation}\label{WRH}
		\bigg(\fint_{D_r}
		|\nabla^\ell v|^p\, dx\bigg)^{1/p}
		\le C_0 \bigg(\fint_{D_{2r}}
		|\nabla^\ell v|^2\, dx\bigg)^{1/2}
\end{equation}
 holds for solutions of
\begin{equation}\label{local-equation}
		\mathcal{L}(D)v=0 \quad \text{in}\quad D_{4r} ~~\text{and} \\
		~~~~~D^\alpha v=0 \quad \text{on}~~\Delta_{4r}.
	\end{equation}
 Let $u\in H^\ell(\Omega)$ be a weak solution of \eqref{DP} with $F\in W^{-\ell,p}(\Omega)$. Then $u\in W^{\ell,p}(\Omega)$ and
	\begin{equation}\label{w-L2-ineq}
		\|\nabla^\ell u\|_{L^p(\Omega)} \le C \|F\|_{W^{-\ell,p}(\Omega)},
	\end{equation}	
with constant $C$ depends only on $d,p,\ell, \mu, C_0$ and the Lipschitz character of $\Omega$.
\end{thm}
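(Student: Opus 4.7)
The plan is to deduce \eqref{w-L2-ineq} from the weak reverse H\"older inequality \eqref{WRH} via Shen's real-variable extrapolation scheme \cite{Shen-2005}, in the higher-order form provided by the good-$\lambda$ inequality (Lemma 2.2) previewed in the introduction. Since $p>2$ and $F \in W^{-\ell,p}(\Omega)$, I would first write $F = \sum_{|\alpha| \leq \ell} D^\alpha f_\alpha$ with $\sum_\alpha \|f_\alpha\|_{L^p(\Omega)} \le C\|F\|_{W^{-\ell,p}(\Omega)}$, and by homogeneity normalize this sum to $1$, so that the goal becomes $\|\nabla^\ell u\|_{L^p(\Omega)} \le C$.

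Fix a boundary point $x_0 \in \partial\Omega$ and a radius $r < c_0 \text{diam}(\Omega)$, and adapt the Lipschitz cylinder $D_{4r}$ to $x_0$. The core step is a decomposition $u = v + w$ in $D_{4r}$, where $w \in H^\ell(D_{4r})$ solves $\mathcal{L}(D) w = F$ weakly with Dirichlet trace $D^\alpha w = 0$ on $\Delta_{4r}$ for $|\alpha| \le \ell-1$, and $v := u - w$ therefore satisfies $\mathcal{L}(D) v = 0$ in $D_{4r}$ with $D^\alpha v = D^\alpha u$ on $\Delta_{4r}$. Applying the hypothesis \eqref{WRH} to $v$ and the standard energy estimate to $w$ (using ellipticity \eqref{ellipticity} and the Poincar\'e inequality iterated $\ell$ times on the flat portion $\Delta_{4r}$), I obtain
\[
\bigg(\fint_{D_r} |\nabla^\ell v|^p \, dx\bigg)^{1/p} \le C_0 \bigg(\fint_{D_{2r}} |\nabla^\ell v|^2\, dx\bigg)^{1/2}
\]
and
\[
\bigg(\fint_{D_{4r}} |\nabla^\ell w|^2\, dx\bigg)^{1/2} \le C \sum_{|\alpha|\leq \ell} r^{\ell-|\alpha|}\bigg(\fint_{D_{4r}} |f_\alpha|^2\, dx\bigg)^{1/2}.
\]
Combining these through the triangle inequality (and writing $\mathbf{f}$ for the rescaled combination $\sum_\alpha r^{\ell-|\alpha|} f_\alpha$) yields the local two-scale comparison
\[
\bigg(\fint_{D_r} |\nabla^\ell u|^p\, dx\bigg)^{1/p} \le C \bigg(\fint_{D_{4r}} |\nabla^\ell u|^2\, dx\bigg)^{1/2} + C \bigg(\fint_{D_{4r}} |\mathbf{f}|^p \, dx\bigg)^{1/p}.
\]
Interior balls $B$ with $4B \subset \Omega$ are handled analogously using the standard interior reverse H\"older estimate for solutions of the constant-coefficient system $\mathcal{L}(D) v = 0$.

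This local comparison is exactly the input to Lemma 2.2: a refined Calder\'on-Zygmund stopping-time argument on the level sets $E(t)$ of $\mathcal{M}(|\nabla^\ell u|^2)$ converts it into the good-$\lambda$ inequality $|E(\theta^{-2/q} t)| \le (\theta/2)|E(t)| + |\{\mathcal{M}(|\mathbf{f}|^2) > \gamma t\}|$; multiplying by $t^{p/2-1}$, integrating in $t$, and invoking the $L^{p/2}$-boundedness of the Hardy-Littlewood maximal operator (valid because $p>2$) produces $\|\nabla^\ell u\|_{L^p(\Omega)} \le C\|\mathbf{f}\|_{L^p(\Omega)} \le C\|F\|_{W^{-\ell,p}(\Omega)}$. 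The main obstacle I expect is the construction and correct scaling of the ``bad piece'' $w$ on the Lipschitz cylinder $D_{4r}$: one needs $H^\ell$-solvability of $\mathcal{L}(D) w = F$ with Dirichlet data vanishing only on $\Delta_{4r}$ (so that $v$ absorbs the remaining ``side'' and ``top'' boundary data of $D_{4r}$ and matches the hypothesis \eqref{WRH}), together with the iterated Poincar\'e/Hardy inequality controlling $\|\nabla^k w\|_{L^2(D_{4r})}$ by $r^{\ell-k}\|\nabla^\ell w\|_{L^2(D_{4r})}$ for all $0 \le k \le \ell$. The higher-order derivative bookkeeping and the mixed-trace nature of the local problem are the delicate technical points, but both are available from the variational formulation once the constants are tracked uniformly in the Lipschitz character of $\Omega$.
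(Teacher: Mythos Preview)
Your overall strategy---decompose $u$ into a ``good'' piece satisfying \eqref{WRH} and a ``bad'' piece controlled by the data, then run a good-$\lambda$ argument on the level sets of $\mathcal{M}(|\nabla^\ell u|^2)$---is exactly the paper's approach, and the extrapolation step you describe matches Lemma~\ref{Good-lambda-1.2} and the proof of Theorem~\ref{main-thm-1}.

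The one substantive difference is in how the bad piece is constructed, and here you have made life harder than necessary. The paper does \emph{not} solve a local problem on $D_{4r}$; instead it localizes the \emph{data} via a cutoff $\varphi$ supported in $5B$ and solves the \emph{global} Dirichlet problem $\mathcal{L}(D)v_B=\sum_{|\alpha|\le\ell}D^\alpha(h^\alpha\varphi)$ in $\Omega$ with $D^\alpha v_B=0$ on all of $\partial\Omega$. The global energy estimate then gives $\|\nabla^\ell v_B\|_{L^2(\Omega)}\le C\|f\|_{L^2(5B)}$, and since $h^\alpha\varphi=h^\alpha$ on $4B$, the difference $u-v_B$ is a homogeneous solution in $4B\cap\Omega$ with zero traces on $4B\cap\partial\Omega$, ready for \eqref{WRH}. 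No mixed-trace problem ever arises.

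Your stated obstacle---solving for $w$ on $D_{4r}$ with Dirichlet data vanishing \emph{only} on $\Delta_{4r}$---is both ill-posed as written and unnecessary. The hypothesis \eqref{WRH} requires only that $D^\alpha v=0$ on $\Delta_{4r}$; it imposes nothing on the sides or top of $D_{4r}$. So even if you insist on the local route, taking $w\in H_0^\ell(D_{4r})$ (zero traces on \emph{all} of $\partial D_{4r}$) already works: then $v=u-w$ inherits $D^\alpha v=D^\alpha u=0$ on $\Delta_{4r}\subset\partial\Omega$, and the Legendre--Hadamard energy estimate on $H_0^\ell(D_{4r})$ gives the needed control of $w$. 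The ``mixed-trace nature'' you flag is a phantom; drop it and either route closes.
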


To formulate the `Good-$\lambda$' type estimate, we notice that for any functional $F\in W^{-\ell,p}(\Omega)$, there exist an array of functions $h^\alpha\in L^p(\Omega)$ where $\alpha$ is a multi-index such that
$$
F=\sum_{|\alpha|\leq \ell} D^\alpha h^\alpha,\quad \text{and}\quad \|F\|_{W^{-\ell,p}(\Omega)}\approx \sum_{|\alpha|\leq \ell} \|h^\alpha\|_{L^p(\Omega)},
$$
and in the next lemma, set
$$
f(x)=\sum_{|\alpha|\leq \ell}|h^\alpha(x)|.
$$
and for cube $Q$ on $\partial \Omega$ and a function $f$ defined on $Q$, we define a localized
Hardy-Littlewood maximal function $\mathcal{M}_{Q}$ by
$$
\mathcal{M}_{Q}(f)(P)=\sup_{P\in Q^\prime\subset Q}\dashint_{Q^\prime}|f|
$$

\begin{lemma}\label{Good-lambda-1.2}
Assume that $f$ is defined as above and $f\in L^q(\Omega)$ for some $2<q<p$. Let $u$ be a weak solution to (\ref{DP}) such that \eqref{WRH} holds. Let $B_0=B(x_0,r_0)$ be a ball with the properties that $x_0\in\partial\Omega$ and $0<r_0=c_0\text{diam}(\Omega)$. Let $Q_0$ be a cube such that $2Q_0\subset 2B_0$ and $|Q_0|\approx|B_0|$. Then there exist constants $\theta$, $\gamma\in (0,2^{-d})$, and $C_0>1$, such that
\begin{equation}\label{good-lambda-1.2}
|E(\theta^{-\frac{2}{q}}t)|\leq (\theta/2)|E(t)|+|\{x\in Q_0:\mathcal{M}_{4B_0}(|f\chi_{4B_0\cap \Omega}|^2)(x)>\gamma t\}|
\end{equation}
for any $t\geq t_0=C_0\fint_{4B_0}|\nabla^\ell u\chi_{4B_0\cap\Omega}|^2\, dx$, where
\begin{equation*}
	E(t)=\{x\in Q_0:\mathcal{M}_{4B_0}(|\nabla^\ell u\chi_{4B_0\cap \Omega}|^2)(x)> t\}.
\end{equation*}
\end{lemma}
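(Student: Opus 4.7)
The plan is to prove this as a good-$\lambda$ inequality via a Calder\'on--Zygmund stopping-time decomposition of $Q_0$ coupled with a local splitting $u=v+w$ around each stopping cube, in which $v$ solves the homogeneous system (so hypothesis \eqref{WRH} applies) and $w$ absorbs the right-hand side via a zero-Dirichlet problem (so it is controlled by $f$).

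First, the normalization $t\geq t_0$, together with $Q_0\subset 4B_0$ and $|Q_0|\approx|B_0|$, guarantees $\dashint_{Q_0}|\nabla^\ell u\chi_{4B_0\cap\Omega}|^2\leq t$. A standard Calder\'on--Zygmund decomposition of $Q_0$ at level $t$ then produces disjoint maximal dyadic subcubes $\{Q_j\}$ with $t<\dashint_{Q_j}|\nabla^\ell u\chi|^2\leq 2^d t$, and (for the dyadic variant of $\mathcal{M}_{4B_0}$) $E(t)=\bigcup_j Q_j$ up to a set of measure zero. I would call $Q_j$ \emph{good} if some $y_j\in Q_j$ satisfies $\mathcal{M}_{4B_0}(|f\chi_{4B_0\cap\Omega}|^2)(y_j)\leq\gamma t$, and \emph{bad} otherwise; the union of bad cubes is contained in $\{x\in Q_0:\mathcal{M}_{4B_0}(|f\chi|^2)(x)>\gamma t\}$, which yields the second term on the right of \eqref{good-lambda-1.2}. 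It therefore suffices to prove $|Q_j\cap E(\theta^{-2/q}t)|\leq(\theta/2)|Q_j|$ for each good $Q_j$.

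Fix a good $Q_j$. Choose $x_j^*\in\partial\Omega$ closest to $Q_j$ and pick a radius $r_j$ comparable to the side length of $Q_j$, large enough that $Q_j$ lies inside a controlled enlargement of the Lipschitz cylinder $D_{r_j}$ centered at $x_j^*$; cubes whose distance to $\partial\Omega$ greatly exceeds their side length are handled analogously by the interior variant on a ball $B_j\Subset\Omega$. Let $w\in W^{\ell,2}(D_{4r_j})$ be the weak solution of $\mathcal{L}(D)w=F$ in $D_{4r_j}$ with $D^\alpha w=0$ on $\partial D_{4r_j}$ for $|\alpha|\leq\ell-1$, and set $v=u-w$. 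Since both $u$ and $w$ vanish to order $\ell-1$ on $\Delta_{4r_j}\subset\partial\Omega$, $v$ satisfies $\mathcal{L}(D)v=0$ in $D_{4r_j}$ with $D^\alpha v=0$ on $\Delta_{4r_j}$, and \eqref{WRH} applies to $v$ (with exponent $p$, hence with any exponent in $(2,p)$ by H\"older). The energy estimate yields $\dashint_{D_{4r_j}}|\nabla^\ell w|^2\leq C\gamma t$; the Calder\'on--Zygmund maximality of $Q_j$ (testing against a parent cube meeting $Q_0\setminus\bigcup_k Q_k$) gives $\dashint_{D_{2r_j}}|\nabla^\ell u|^2\leq Ct$, and hence $\dashint_{D_{2r_j}}|\nabla^\ell v|^2\leq Ct$. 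Then \eqref{WRH} produces $\int_{D_{r_j}}|\nabla^\ell v|^q\,dx\leq C|Q_j|\,t^{q/2}$.

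For $x\in Q_j$, I would estimate $\mathcal{M}_{4B_0}(|\nabla^\ell u\chi|^2)(x)$ by splitting cubes $Q'\ni x$ into those contained in $D_{r_j}$ (handled by $|\nabla^\ell u|^2\leq 2|\nabla^\ell v|^2+2|\nabla^\ell w|^2$) and those strictly larger than $D_{r_j}$ (whose averages are bounded by $Ct$ via the Calder\'on--Zygmund step). Choosing $\theta$ small so that $\theta^{-2/q}t$ dominates the large-scale contribution, and applying the weak-$L^{q/2}$ bound for the maximal function to the $v$-term and the weak-$L^1$ bound to the $w$-term, yields $|Q_j\cap E(\theta^{-2/q}t)|\leq(C_1\theta+C_2\gamma\theta^{2/q})|Q_j|$. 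A final choice of $\theta$ sufficiently small, followed by $\gamma$ small depending on $\theta$, produces the bound $(\theta/2)|Q_j|$; summing over good cubes then gives \eqref{good-lambda-1.2}. The main obstacle is the geometric bookkeeping: selecting $r_j$ so that $D_{4r_j}$ both contains $Q_j$ and meets enough of $\partial\Omega$ for \eqref{WRH} to apply, treating boundary and interior cubes uniformly, and tracking constants through the reverse-H\"older estimate so that the leading $C_1\theta$ factor can indeed be absorbed into $\theta/2$ after a careful final choice of $\theta$.
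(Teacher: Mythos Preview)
Your overall strategy matches the paper's: Calder\'on--Zygmund decomposition of $Q_0$, good/bad cube dichotomy, a local splitting $u=v+w$ near each good cube with $v$ homogeneous and $w$ carrying the data, then weak-type maximal estimates. One cosmetic difference is that you solve the inhomogeneous problem for $w$ on the local cylinder $D_{4r_j}$, whereas the paper solves it on all of $\Omega$ with a cutoff on the right-hand side; both versions give the same energy bound $\fint|\nabla^\ell w|^2\le C\gamma t$, and the paper's choice avoids some of the interior/boundary case geometry you mention.

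There is, however, a genuine gap in your absorption step, at exactly the place you flag at the end. After invoking \eqref{WRH} you retain only the $L^q$ bound $\int_{D_{r_j}}|\nabla^\ell v|^q\le C|Q_j|\,t^{q/2}$ and then apply the weak-$(q/2,q/2)$ maximal inequality at level $\theta^{-2/q}t$; this produces the term $C_1\theta\,|Q_j|$ with a fixed structural constant $C_1$. No choice of $\theta$ can force $C_1\theta\le(\theta/2)$ once $C_1>1/2$, so ``a careful final choice of $\theta$'' cannot close the estimate. The remedy---and this is what the paper does---is to use the full strength of \eqref{WRH} at exponent $p$ and the weak-$(p/2,p/2)$ maximal inequality:
\[
\bigl|\{x\in Q_j:\mathcal{M}_{2B_j}(|\nabla^\ell v|^2)(x)>\tfrac14\theta^{-2/q}t\}\bigr|
\le C\Bigl(\frac{\theta^{2/q}}{t}\Bigr)^{p/2}\!\int_{D_{r_j}}\!|\nabla^\ell v|^{p}
\le C\,\theta^{p/q}\,|Q_j|.
\]
Since $p>q$, the extra factor $\theta^{p/q-1}$ tends to $0$ with $\theta$, so $C\theta^{p/q-1}<1/4$ for $\theta$ small. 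The paper's final bound reads
\[
|E(\theta^{-2/q}t)\cap Q_j|\le\theta\bigl(C\gamma\theta^{2/q-1}+C\theta^{p/q-1}\bigr)|Q_j|,
\]
and one first chooses $\theta$ small to handle the second term, then $\gamma$ small (depending on $\theta$) for the first. The strict inequality $p>q$ is precisely the mechanism that makes the good-$\lambda$ inequality close; downgrading to exponent $q$ discards it.
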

\begin{proof}
Let $u$ be a weak solution to (\ref{DP}) and fix $x_0\in\partial\Omega, 0<r_0=c_0\text{diam}(\Omega)$. For any ball $B=B(y_0,r)$ such that $|B|\leq c_1r_0^d$ and either $y_0\in B(x_0, 2r_0)\cap\partial\Omega$ or $4B\subset B(x_0, 2r_0)\cap\Omega$, let $v_B\in H^\ell(\Omega)$ be the weak solution of
	\begin{equation}\label{v_B}
		\mathcal{L}(D)  v_B=\sum_{|\alpha|\leq \ell} D^\alpha (h^\alpha \varphi) \quad \text{in}\quad \Omega ~~\text{and} \\
		~~~~~D^\alpha v_B=0 \quad \text{on}~~\partial\Omega,
	\end{equation}
	for all
	$|\alpha|\leq \ell-1$ and $\varphi\in C_0^\infty(\mathbb{R}^d)$ is a cut-off function satisfying
	\begin{equation}
		\begin{cases}
			\varphi=1 \quad \text{ in } 4B,\\
			\varphi=0 \quad \text{ outside } 5B,\\
			0\le \varphi\le 1.
		\end{cases}
	\end{equation}
	By the energy estimate, we have that
	\begin{equation}\label{myers-5}
		\int_{\Omega} |\nabla^\ell v_B|^2
		\le C \int_{\Omega} |\varphi f|^2
		\le C\int_{5B} | f\chi_{4B_0\cap \Omega}|^2.
	\end{equation}
	Since
	\begin{equation*}
		\mathcal{L}(D)  (u-v_B)=0 \quad \text{in}\quad 4B\cap\Omega \quad\text{and}
	\quad D^\alpha(u-v_B)=0 \quad \text{on}~~4B\cap\partial\Omega,
	\end{equation*}
	it follows from \eqref{WRH} that
\begin{equation}\label{3.1-5}
	\aligned
	\left(\fint_{ 2B\cap \Omega}
	|\nabla^\ell (u-v_B)|^{p} \right)^{1/p}
	\le C \left(\fint_{4B\cap \Omega}
		|\nabla^\ell (u-v_B)|^2 \right)^{1/2}
	\endaligned
\end{equation}
	for some $p>2$.

In view of \eqref{myers-5}, this implies that
\begin{equation}\label{u-vB}
	\aligned
		&\left(\fint_{ 2B\cap \Omega}
	|\nabla^\ell (u-v_B)|^{p} \right)^{1/p}\\
	&\qquad\qquad\le C \left\{
	\left(\fint_{ 4B\cap \Omega}
	|\nabla^\ell u|^2
	\right)^{1/2}
	+
	\left(\fint_{4B\cap \Omega }
	|\nabla^\ell v_B|^2 \right)^{1/2}
	\right\}\\
	&\qquad\qquad
	\le
	C \left\{
	\left(\fint_{ 4B}
	|\nabla^\ell u\chi_{4B_0\cap\Omega}|^2
	\right)^{1/2}
	+
	\left(\fint_{5B}
	|f\chi_{4B_0\cap\Omega}|^2 \right)^{1/2}
	\right\}.
	\endaligned
\end{equation}
	
To proceed, by the weak $(1,1)$ estimate for $\mathcal{M}_{4B_0}$,
\begin{equation}
	|E(t)|\leq \frac{C}{t}\int_{4B_0}|\nabla^\ell u\chi_{4B_0\cap \Omega}|^2\,dx\leq \frac{\theta}{2}|Q_0|
\end{equation}
for $t\geq t_0$, where $t_0=C_0\fint_{4B_0}|\nabla^\ell u \chi_{4B_0\cap \Omega}|^2\, dx$ with $C_0=2C|4B_0|/(\theta|Q_0|)$ and $\theta$ is a small constant to be chosen later. Since $E(t)$ is relatively open in $Q_0$, by the Calder\'on-Zygmund decomposition, there exists a sequence of disjoint maximal dyadic subcubes $\{Q_k\}$ of $Q_0$ such that
\begin{equation*}
 Q_k\subset E(t)\qquad\text{and}\qquad|E(t)\setminus\cup_k Q_k|=0.
\end{equation*}
By choosing $\theta$ small, we may assume that $|Q_k|\leq c_1|Q_0|$. For each $Q_k$, let $B_k$ be the smallest ball such that $Q_k\subset B_k$.
We show that there exist constants $\theta$, $\gamma\in (0,2^{-d})$, such that for any $Q_k$ satisfying
\begin{equation}\label{cond-Q-k}
	Q_k\cap\{x\in Q_0:\mathcal{M}_{4B_0}(|f\chi_{4B_0\cap\Omega}|^2)(x)\leq \gamma t\}\neq\emptyset,	
\end{equation}
we have
\begin{equation}\label{omegaE}
	|E(\theta^{-\frac{2}{q}} t)\cap Q_k|	\leq(\theta/2)|Q_k|.
\end{equation}
Estimate (\ref{good-lambda-1.2}) follows from (\ref{omegaE}) directly.

To show (\ref{omegaE}), note that \begin{equation}\label{estimate-maximal}
	\mathcal{M}_{4B_0}(|\nabla^\ell u\chi_{4B_0\cap\Omega}|^2)(x)\leq \max\{\mathcal{M}_{2B_k}(|\nabla^\ell u\chi_{4B_0\cap\Omega}|^2)(x),C_d t\}
\end{equation}
for any $x\in Q_k$. It follows that if $\theta^{-\frac{2}{q}}\geq C_d$, then
\begin{equation}\label{estimate-omega-E}
\aligned
|E(\theta^{-\frac{2}{q}} t)\cap Q_k|
& \le \left|
\left \{ x\in Q_k:\  \mathcal{M}_{2B_k} (|\nabla^\ell u\chi_{4B_0\cap\Omega}|^2 ) (x) >\theta^{-\frac{2}{q}}  t\right\}\right|.
\endaligned
\end{equation}

We consider the following three cases.

Case 1. If $4B_k\cap\Omega=\emptyset$, then $\mathcal{M}_{2B_k} (|\nabla^\ell u\chi_{4B_0\cap\Omega}|^2)=0$, thus $| (E(\theta^{-\frac{2}{q}} t)\cap Q_k|=0$.

Case 2. If $4B_k\subset\Omega$, then $\mathcal{M}_{2B_k} (|\nabla^\ell u\chi_{4B_0\cap\Omega}|^2)=\mathcal{M}_{2B_k} (|\nabla^\ell u|^2)$. Let $v_{B_k}$ be given by (\ref{v_B}), then $|\nabla^\ell u|\leq|\nabla^\ell v_{B_k}|+|\nabla^\ell (u-v_{B_k})|$. This, together with (\ref{estimate-omega-E}), implies that
\begin{equation}\label{estimate-omegaE-1}
	\aligned
	|E(\theta^{-\frac{2}{q}} t)\cap Q_k|
	&\le \left|\left\{ x\in Q_k: \  \mathcal{M}_{2B_k}
	(|\nabla^\ell v_{B_k} |^2) (x)>\frac{\theta^{-\frac{2}{q}} t }{4} \right\}\right|\\
	& \quad
	+ \left|\left\{ x\in Q_k:  \  \mathcal{M}_{2B_k}
	(|\nabla^\ell(u-v_{B_k}) |^2) (x)>\frac{\theta^{-\frac{2}{q}}
		t }{4} \right\}\right|\\
	&=I_1 +I_2.
	\endaligned
\end{equation}
By the weak (1,1) estimate for $\mathcal{M}_{2B_k}$, we obtain
\begin{equation}\label{estimate-1}
	\aligned
	I_1
	&\leq\frac{C\theta^{\frac{2}{q}}}{t}\fint_{2B_k}|\nabla^\ell v_{B_k}|^2\,dx |Q_k|\\
	&\leq\frac{C\theta^{\frac{2}{q}}}{t}\fint_{5B_k}|f\chi_{4B_0\cap\Omega}|^2\,dx |Q_k|
		\\
&	\leq C\gamma\theta^{\frac{2}{q}}|Q_k|
	\endaligned
\end{equation}
for any $t\geq t_0$, where we have used (\ref{myers-5}) for the second inequality and (\ref{cond-Q-k}) for the last.

We now estimate $I_2$. By the weak $(\frac{p}{2},\frac{p}{2})$ estimate for $\mathcal{M}_{2B_k}$,
\begin{equation*}\label{I2-1.2}
	\aligned
		I_2 &\le  C
	\bigg(
	\frac{\theta^{\frac{2}{q}}}{t}
	\bigg)^{\frac{p}{2}}
	\fint_{2B_k}
	|\nabla^\ell(u-v_{B_k})|^{p}\,  dx\,  |Q_k|\\
	&\le
	C\theta^{\frac{p}{q}}t^{-\frac{p}{2}}
	\left\{\left(\fint_{4B_k}|\nabla^\ell u\chi_{4B_0\cap\Omega}|^2\right)^{{\frac{p}{2}}}+\left(\fint_{5B_k}|f\chi_{4B_0\cap\Omega}|^2\right)^{{\frac{p}{2}}}\right\}|Q_k|\\
&\le
	C\theta^{\frac{p}{q}}t^{-\frac{p}{2}}\left\{t^{p/2}+(t\gamma)^{p/2}\right\}|Q_k|,
	\endaligned
\end{equation*}
where we have used (\ref{u-vB}) for the second inequality and (\ref{cond-Q-k}) for the last.
This, together with (\ref{estimate-omegaE-1}) and (\ref{estimate-1}), gives
\begin{equation*}
|E(\theta^{-\frac{2}{q}} t)\cap Q_k|	\leq \theta\left\{C\gamma\theta^{\frac{2}{q}-1}+C\theta^{\frac{p}{q}-1}\right\}|Q_k|
\end{equation*}
for any $Q_k$ satisfies (\ref{cond-Q-k}). Since $p>q>2$, we can choose $\theta$ then $\gamma>0$ sufficiently small such that $\left\{C\gamma\theta^{\frac{2}{q}-1}+C\theta^{\frac{p}{q}-1}\right\}\leq 1/4$. This gives (\ref{omegaE}).

Case 3. If $4B_k\cap\partial\Omega\neq\emptyset$, suppose that $B_k=B(z_k,r_k)$. Let $y_k\in 4B_k\cap\partial\Omega$ and $\widetilde{B}_k=B(y_k,4r_k)$. Note that $2B_k\subset 2\widetilde{B}_k$ and $2\widetilde{B}_k\subset 12 B_k$. Let $v_{\widetilde{B}_k}$ be given by (\ref{v_B}). Then we have
\begin{equation}\label{gradu}
	|\nabla^\ell u\chi_{4B_0\cap\Omega}|\leq |\nabla^\ell v_{\widetilde{B}_k}\chi_{4B_0\cap\Omega}|+|\nabla^\ell (u-v_{\widetilde{B}_k})\chi_{4B_0\cap\Omega}|
\end{equation}
on $2B_k$. By a similar argument as in Case 2, we obtain that (\ref{omegaE}) holds. Thus we complete the proof.
\end{proof}

\begin{proof}[\bf Proof of Theorem \ref{main-thm-1}]

Let $u$ be a weak solution to the Dirichlet problem (\ref{DP}) and $B_0=B(x_0,r_0)$ be a ball with the properties that $x_0\in\partial\Omega$ and $0<r_0=c_0\text{diam}(\Omega)$. In view of Lemma \ref{Good-lambda-1.2}, we know that \eqref{good-lambda-1.2} holds.
 Multiplying both sides of \eqref{good-lambda-1.2} with $t^{\frac{2}{q}-1}$ and integrating from $t_0$ to $T$, we obtain
	\begin{equation*}
		\aligned
		\int_{t_0}^T t^{\frac{q}{2}-1}|E(\theta^{-\frac{2}{q}} t)|\,dt
		\le& \frac{\theta}{2}	\int_{t_0}^T t^{\frac{q}{2}-1} |E(t)|\,dt\\
		&+ 	\int_{t_0}^T t^{\frac{q}{2}-1}|\big\{ x\in Q_0: \mathcal{M}_{4B_0} (|f\chi_{4B_0\cap\Omega}|^2 ) (x) >\gamma t \big \}|\,dt.
		\endaligned
	\end{equation*}
	By a change of variable, this gives
	\begin{equation*}
		\aligned
	\frac{\theta}{2}\int_{\theta^{-\frac{2}{q}} t_0}^{\theta^{-\frac{2}{q}}T} t^{\frac{q}{2}-1}|E(t)| \,dt
		\le &\frac{\theta}{2}	\int_{t_0}^{\theta^{-\frac{2}{q}}t_0} t^{\frac{q}{2}-1} |E(t)|\,dt\\
		&+ 	\int_{t_0}^T t^{\frac{q}{2}-1}|\big\{ x\in Q_0: \mathcal{M}_{4B_0} (|f\chi_{4B_0\cap\Omega}|^2 ) (x) >\gamma t \big \}|\,dt,
		\endaligned
	\end{equation*}
	which implies that
	\begin{equation*}
		\aligned
		\int_{0}^{\theta^{-\frac{2}{q}}T} t^{\frac{q}{2}-1}|E(t)|\,dt
		&\leq Ct_0^{\frac{q}{2}}|Q_0| + C\int_{Q_0} \mathcal{M}_{4B_0} (|f\chi_{4B_0\cap\Omega}|^2 )^{\frac{q}{2}}\,dx\\
		&\leq Ct_0^{\frac{q}{2}}|Q_0| + C\int_{4B_0\cap\Omega}  |f|^q \,dx.
		\endaligned
	\end{equation*}
	Letting $T\rightarrow\infty$, we obtain
	\begin{equation*}
		\aligned
		\int_{Q_0}&\big\{ \mathcal{M}_{4B_0} (|\nabla^\ell u\chi_{4B_0\cap\Omega}|^2)  \big\}^{\frac{q}{2}}\,dx\\
		&\qquad\leq C\left(\fint_{4B_0}|\nabla^\ell u\chi_{4B_0\cap\Omega}|^2\right)^{\frac{q}{2}}|Q_0| + C\int_{4B_0\cap\Omega}  |f|^q \,dx.
		\endaligned
	\end{equation*}
Next, since $|\nabla^\ell u\chi_{4B_0\cap\Omega}|^{2}(x)\leq\mathcal{M}_{4B_0} (|\nabla^\ell u\chi_{4B_0\cap\Omega}|^{2})(x) $ a.e.,  by H\"older's inequality and the energy estimate, we obtain
\begin{equation*}
\int_{ Q_0}\
|\nabla^\ell u\chi_{4B_0\cap\Omega}|^q \, dx
 \le C\int_{ 4B_0\cap \Omega}
|f|^q \, dx
+C\left(\fint_{4B_0}|\nabla^\ell u\chi_{4B_0\cap\Omega}|^2\right)^{\frac{q}{2}}|Q_0|\leq C\int_{\Omega}
|f|^q\, dx.
\end{equation*}
By covering $B_0$ with cubes, we see that
\begin{align*}
\int_{ B_0\cap\Omega}|\nabla^\ell u|^q\, dx\leq C\int_\Omega|f|^q\, dx.
\end{align*}
Similarly, we can prove that this inequality also holds for $4B_0\subset\Omega$.
The desired estimate (\ref{w-L2-ineq}) now follows by a simple covering argument. Thus we complete the proof of necessity.
\end{proof}

\subsection{A simpler and stronger sufficient condition}
The next theorem gives a much simpler condition which implies \eqref{WRH}.
\begin{thm}\label{main-thm-2}
	Let $\mathcal{L}(D)$ be a system of elliptic operators of order $2\ell$ given by \eqref{elliptic operator} and $\Omega$  a bounded Lipschitz domain in $\mathbb{R}^d, d\geq 2$.
	Assume that  $A$ is a matrix satisfying \eqref{symmetric} and \eqref{ellipticity}.
Suppose that there exist constants $C_0>0$, $R_0>0$ and $2\leq \lambda< d+2$ such that if $0<r<R<R_0$
	\begin{equation}\label{WWRH}
		\int_{D_r} |\nabla^\ell v|^2\, dx
		\le C \bigg(\frac{r}{R}\bigg)^{\lambda-2}\int_{D_{R}}
		|\nabla^\ell v|^2\, dx
	\end{equation}
holds whenever
   \begin{equation}\label{}
		\mathcal{L}(D)v=0 \quad \text{in}\quad D_{4r} ~~\text{and} \\
		~~~~~D^\alpha v=0 \quad \text{on}~~\Delta_{4r}
	\end{equation}
for all $|\alpha|\leq \ell-1$. Then if
	\begin{equation}\label{p}
		2<p<2+\frac{2}{d+2-\lambda},
	\end{equation}	
the $L^p$ Dirichlet problem \eqref{DP} is uniquely solvable.
\end{thm}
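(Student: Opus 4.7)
The plan is to reduce, via Theorem \ref{main-thm-1}, to establishing the weak reverse H\"older inequality \eqref{WRH} for $p\in(2,\, 2+\frac{2}{d+2-\lambda})$; once this is in hand, Theorem \ref{main-thm-1} delivers the $W^{\ell,p}$ estimate and hence unique solvability of \eqref{DP}. Fix a boundary ball $B(x_0,r)$ and let $v$ be a solution of $\mathcal{L}(D)v=0$ in $D_{4r}$ satisfying $D^\alpha v=0$ on $\Delta_{4r}$ for all $|\alpha|\leq \ell-1$.

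Since $\mathcal{L}(D)$ has constant coefficients, $\nabla^\ell v$ is itself a solution of $\mathcal{L}(D)(\nabla^\ell v)=0$ in $D_{4r}$, and the standard interior estimate gives
$$
|\nabla^\ell v(x)|^2 \leq C\, d(x)^{-d}\int_{B(x,d(x)/2)}|\nabla^\ell v|^2\, dy,\qquad d(x):=\mathrm{dist}(x,\Delta_{4r}).
$$
Selecting the nearest boundary point $Q\in\Delta_{4r}$ to $x$ and applying \eqref{WWRH} between the cylinders $D_{C d(x)}(Q)\supset B(x,d(x)/2)$ and $D_{4r}(Q)$ yields the pointwise bound
$$
|\nabla^\ell v(x)|^2 \leq C\, d(x)^{\lambda-2-d}\, r^{2-\lambda}\, I,\qquad I:=\int_{D_{4r}}|\nabla^\ell v|^2\, dy.
$$

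To pass from this pointwise estimate to \eqref{WRH}, I write $|\nabla^\ell v|^p=|\nabla^\ell v|^{p-2}\cdot |\nabla^\ell v|^2$, apply the pointwise bound only to the excess factor $|\nabla^\ell v|^{p-2}$, and treat the remaining $L^2$ factor through the square function estimate \eqref{square function}. Unwinding $S(\nabla^{\ell-1}v)^2(Q)=\int_{\Gamma(Q)}|\nabla^\ell v|^2\,\delta^{2-d}\, dy$ by Fubini and combining with \eqref{WWRH} on each dyadic cylinder yields a weighted $L^2$ bound on $\nabla^\ell v$ that precisely compensates for the negative exponent $d(x)^{(p-2)(\lambda-2-d)/2}$ produced by the pointwise estimate. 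Summing over dyadic annuli $A_k=\{2^{-k-1}r<d(x)\leq 2^{-k}r\}\cap D_r$ then yields
$$
\int_{D_r}|\nabla^\ell v|^p\, dx \leq C\, r^{d-pd/2}\, I^{p/2},
$$
which is exactly \eqref{WRH}; the dyadic series converges precisely under the threshold $p<2+\frac{2}{d+2-\lambda}$.

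The principal obstacle is this dyadic summation. A purely pointwise treatment of $|\nabla^\ell v|^p$ would give only the much weaker range $p<\frac{2}{d+2-\lambda}$, which produces no admissible $p>2$ when $\lambda\leq d+1$. The additive $2$ in the threshold is obtained precisely by invoking the square function estimate \eqref{square function} to replace an $L^\infty$-type weight by a weighted $L^2$-integration; the latter gains summability in $k$ through the Morrey decay \eqref{WWRH}, making the resulting geometric series convergent throughout the full claimed range. Once \eqref{WRH} has been established for this range of $p$, Theorem \ref{main-thm-1} gives $\|\nabla^\ell u\|_{L^p(\Omega)}\leq C\|F\|_{W^{-\ell,p}(\Omega)}$ and unique solvability follows.
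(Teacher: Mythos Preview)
Your overall architecture is exactly the paper's: reduce to the weak reverse H\"older inequality \eqref{WRH} via Theorem~\ref{main-thm-1}, derive the pointwise bound
\[
|\nabla^\ell v(x)|\le C\Big(\frac{\delta(x)}{r}\Big)^{(\lambda-2-d)/2}
\Big(\fint_{D_{2r}}|\nabla^\ell v|^2\Big)^{1/2}
\]
from interior estimates together with \eqref{WWRH}, then write $|\nabla^\ell v|^p=|\nabla^\ell v|^{p-2}\,|\nabla^\ell v|^2$ and apply the pointwise bound only to the first factor. The remaining task is to control the weighted integral $\int_{D_r}|\nabla^\ell v|^2\,\delta(x)^{-\alpha}\,dx$ with $\alpha=\tfrac{p-2}{2}(d+2-\lambda)\in(0,1)$, and this is where your proposal diverges from the paper and, as written, does not close.

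The square function identity you quote, after Fubini, produces
$\int_{\partial D}S(\nabla^{\ell-1}v)^2\,d\sigma\approx\int_D|\nabla^\ell v|^2\,\delta\,dx$,
i.e.\ a bound carrying the weight $\delta^{+1}$, which is the wrong sign to compensate for $\delta^{-\alpha}$. If instead one decomposes $D_r$ into dyadic strips $A_k=\{\delta\sim 2^{-k}r\}$, covers each by $\sim 2^{k(d-1)}$ cylinders of scale $2^{-k}r$, and applies \eqref{WWRH} on each, one obtains
$\int_{A_k}|\nabla^\ell v|^2\lesssim 2^{k(d+1-\lambda)}\int_{D_{2r}}|\nabla^\ell v|^2$,
and the resulting series $\sum_k 2^{k\alpha}\,2^{k(d+1-\lambda)}$ diverges for every $\alpha>0$ when $\lambda\le d+1$. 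So the ``square function plus dyadic WWRH'' mechanism, as described, does not yield the claimed weighted $L^2$ bound.

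What the paper actually uses is the nontangential maximal function of $\nabla^\ell v$ (one order higher than in \eqref{square function}) together with the $L^2$ \emph{regularity} estimate
$\|(\nabla^\ell v)^*\|_{L^2(\partial D_{tr})}\le C\|\nabla_{\mathrm{tan}}\nabla^{\ell-1}v\|_{L^2(\partial D_{tr})}$.
Integrating this in $t\in(1,2)$ gives $\int_{\Delta_r}|(\nabla^\ell v)^*_r|^2\,d\sigma\le Cr^{-1}\int_{D_{2r}}|\nabla^\ell v|^2$. Since $|\nabla^\ell v(x)|\le (\nabla^\ell v)^*_r(x',\psi(x'))$ pointwise along each vertical fiber, one may integrate the weight directly:
\[
\int_{D_r}|\nabla^\ell v|^2\,\delta^{-\alpha}
\le \int_{\Delta_r}|(\nabla^\ell v)^*_r|^2\,d\sigma\int_0^{cr}t^{-\alpha}\,dt
\le C r^{-\alpha}\int_{D_{2r}}|\nabla^\ell v|^2,
\]
the $t$-integral converging precisely when $\alpha<1$, i.e.\ $p<2+\frac{2}{d+2-\lambda}$. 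No dyadic summation is needed. If you replace your square function step by this maximal-function argument, your plan becomes the paper's proof verbatim.
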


In this section we give the proof of Theorem \ref{main-thm-2}, in view of Theorem \ref{main-thm-1}, it suffices  to show that the weak reverse H\"older inequality
\begin{equation}
\left\{\dashint_{D_r}|\nabla^\ell u|^p \right\}^{\frac{1}{p}} \leqslant C
\left\{\dashint_{D_{2r}}|\nabla^\ell u|^2\right\}^{\frac{1}{2}}
\end{equation}
holds for $2<p<2+\frac{2}{d+2-\lambda}$ if $d\geqslant 4$
whenever $u$
satisfies
   \begin{equation}\label{}
		\mathcal{L}(D)u=0 \quad \text{in}\quad D_{4r} ~~\text{and} \\
		~~~~~D^\alpha u=0 \quad \text{on}~~\Delta_{4r}
	\end{equation}
for all $|\alpha|\leq \ell-1$.

\begin{proof}[Proof of Theorem \ref{main-thm-2}]
For $x=(x^\prime,x_d)\in D_{2r}$, let $\delta(x)=|x_d-\psi(x^\prime)|$. It follows from the interior estimate that
 \begin{equation}\label{interior}
		|D^\alpha u(x)|\leq \frac{C_\alpha}{r^{d+|\alpha|}}\int_{B(x,r)}|u(y)|dy\quad~~for~~any~~B(x,2r)\subset \Omega.
	\end{equation}
This, together with \eqref{WWRH} and Poincar\'e inequality, gives
\begin{align}\label{6.3}
|\nabla^\ell u(x)|&\leq C
\left\{\dashint_{B(x,\delta(x))}|\nabla^\ell u|^2dy\right\}^{\frac{1}{2}}\\\nonumber
&\leq C\delta(x)^{-d/2}r^{d/2}\bigg(\frac{\delta(x)}{r}\bigg)^{\frac{\lambda-2}{2}}
\left\{\dashint_{D_{2r}}|\nabla^\ell
u|^2dy\right\}^{\frac{1}{2}}\\\nonumber
&=C\bigg(\frac{\delta(x)}{r}\bigg)^{\frac{\lambda-2-d}{2}}
\left\{\dashint_{D_{2r}}|\nabla^\ell
u|^2dy\right\}^{\frac{1}{2}}.
\end{align}

Since $A$ is a constant matrix satisfying the conditions $(\ref{ellipticity})$ and $(\ref{symmetric})$, it follows from the $L^2$ estimate (see\cite{PV-1995}\cite{Verchota-1990}) we obtain that
\begin{equation}\label{6.4}
\|(\nabla^\ell u)^*\|_{L^{2}(\partial D_{tr})}\leqslant C\|\nabla_{tan}\nabla^{\ell-1}u\|_{L^{2}(\partial D_{tr})}.
\end{equation}
Integrating with respect to $t\in(1,2)$ on $D_{tr}$, we obtain
\begin{equation}\label{6.5}
\int_{\Delta_r}|(\nabla^\ell u)^*_r|^{2}d\sigma \leqslant \frac{C}{r}\int_{D_{2r}}|\nabla^\ell u|^2dx,
\end{equation}
here $(\nabla^\ell u)^*_r(x^\prime, \psi(x^\prime))$ denotes the nontangential maximal function of $\nabla^\ell u$ with respect
to the Lipschitz sub-domain $D_r$.

Note that if $(p-2)(\frac{\lambda-2-d}{2})>-1$, we have
\begin{align}\label{6.6}
\int_{D_r}|\nabla^\ell u|^pdx&\leqslant \frac{C}{r^{(p-2)(\frac{\lambda-d-2}{2})}}
\int_{D_r}|\nabla^\ell u|^2(\delta(x))^{(p-2)(\frac{\lambda-d-2}{2})}dx\left\{\dashint_{D_{2r}}|\nabla^\ell u|^2dx\right\}^{\frac{p}{2}-1}\nonumber
\\&\leqslant\frac{C}{r^{(p-2)(\frac{\lambda-2-d}{2})}}\int_{\Delta_r}|(\nabla^\ell u)_r^*|^2d\sigma\int_0^{cr}t^{(p-2)(\frac{\lambda-2-d}{2})}dt\left\{\dashint_{D_{2r}}|\nabla^\ell
u|^2dy\right\}^{\frac{p}{2}-1}\nonumber
\\&\leqslant Cr\int_{\Delta_r}|(\nabla^\ell u)_r^*|^2d\sigma
\left\{\dashint_{D_{2r}}|\nabla^\ell
u|^2dy\right\}^{\frac{p}{2}-1},
\end{align}
where (\ref{6.3}) was used in the first inequality.

Combining (\ref{6.5}) and (\ref{6.6}), we obtain the desired estimate
\begin{equation}\label{6.8}
\left\{\dashint_{D_r}|\nabla^\ell
u|^p dx \right\}^{\frac{1}{p}} \leqslant
C\left\{\dashint_{D_{2r}}|\nabla^\ell
u|^2 dx \right\}^{\frac{1}{2}}
\end{equation}
for
$$
2<p<2+\frac{2}{d+2-\lambda}.
$$
Combining with  Theorem \ref{main-thm-1}, we complete the proof.

%Step $2$ (Duality). The Theorem \ref{Scalar} was partially proved if $p>2$ and $g=0$ in step $1$, the remaining proof of Theorem \ref{Scalar} in the case of $p<2$ and $g \neq 0$ could be deduced from the duality argument as in section $4$.
%Thus completes the proof.
\end{proof}

\section{Higher order elliptic equations and the polyharmonic equation}
\subsection{Proof of Theorem \ref{main-thm-3}}
\begin{proof}
We divide the proof into two parts.\\
Step $1$ ($p>2$).
In view of Theorem \ref{main-thm-2}, it suffices for us to show for any elliptic operator $\mathcal{L}(D)$ and for some $\lambda>3$, the condition \eqref{WWRH} holds. To see this, let $v$ be a weak solution of
   \begin{equation}\label{3.1}
		\mathcal{L}(D)v=0 \quad \text{in}\quad D_{4r} ~~\text{and} \\
		~~~~~D^\alpha v=0 \quad \text{on}~~\Delta_{4r}.
	\end{equation}
It follows from H\"older's inequality we have
   \begin{align}\label{3.2}
		\int_{D_r}|\nabla^{\ell} v|^2&\leq C
        r \int_{\Delta_r}|(\nabla^{\ell} v)^*|^2\nonumber\\
        &\leq Cr^{1+(d-1)(1-2/q)} \bigg(\int_{\Delta_{\rho r}}|(\nabla^{\ell} v)^*|^q\bigg)^{2/q},
	\end{align}
where $3/2<\rho< 3$ and $q>2$. Choose $q>2$ so that the regularity estimate
$$
\|(\nabla^\ell v)^*\|_{L^{q}(\partial D_{\rho R})}\leq C\|\nabla_{tan}\nabla^{\ell-1}v\|_{L^{q}(\partial D_{\rho R})}
$$
holds. Hence,
\begin{align}\label{3.3}
		\bigg(\int_{D_r}|\nabla^{\ell} v|^2\bigg)^{q/2}
        \leq Cr^{q/2+(d-1)(q/2-1)} \int_{\Omega\cap\partial D_{\rho R}}|\nabla^{\ell} v|^q.
	\end{align}
Integrating both sides of \eqref{3.3} w.r.t. $\rho\in (3/2,3)$, we obtain
\begin{align}\label{3.4}
		\bigg(\int_{D_r}|\nabla^{\ell} v|^2\bigg)^{q/2}
               \leq CR^{-1}r^{q/2+(d-1)(q/2-1)} \int_{D_{3R}}|\nabla^{\ell} v|^q.
	\end{align}
Using the higher integrability of $v$, one obtains that
\begin{align}\label{3.5}
		\int_{D_r}|\nabla^{\ell} v|^2
               &\leq CR^{-d}r^{d} \bigg(\frac{r}{R}\bigg)^{(1-d)\frac{2}{q}}\int_{D_{4R}}|\nabla^{\ell} v|^2\nonumber\\
               &=C\bigg(\frac{r}{R}\bigg)^{d+(1-d)(\frac{2}{q})}\int_{D_{4R}}|\nabla^{\ell} v|^2.
	\end{align}
This means that estimate \eqref{3.5} implies condition \eqref{WWRH} with
$$
\lambda-2=d+\frac{2}{q}(1-d)=1+(d-1)(1-\frac{2}{q}),
$$
that is,
$\lambda> 3$.

Step $2$ ($p<2$). This follows from a duality argument, we refer the readers to \cite{Geng-2012} with slight modification.
\end{proof}

\begin{remark}
We present another method here to establish the $L^p$ solvability of \eqref{DP}. Indeed, by Theorem \ref{main-thm-1}, it suffices  to show that the weak reverse H\"older inequality
\begin{equation}\label{3.6}
\left\{\dashint_{D_r}|\nabla^\ell u|^p \right\}^{\frac{1}{p}} \leqslant C
\left\{\dashint_{D_{2r}}|\nabla^\ell u|^2\right\}^{\frac{1}{2}}
\end{equation}
holds for $2<p<\frac{2d}{d-1}+\e$ if $d\geq 2$
whenever $u$ satisfies
   \begin{equation*}
		\mathcal{L}(D)u=0 \quad \text{in}\quad D_{4r} ~~\text{and} \\
		~~~~~D^\alpha u=0 \quad \text{on}~~\Delta_{4r}
	\end{equation*}
for all $|\alpha|\leq \ell-1$. The case $\frac{2d}{d+1}-\e<p<2$ follows by a duality argument which is similar as in the proof of Theorem \ref{main-thm-3}.

To show \eqref{3.6}, for a Lipschitz domain $D_{\rho r}$ where $\rho\in (1,2)$, we first note that the Sobolev imbedding
implies that
\begin{equation}\label{3.7}
		\|\nabla^\ell u\|_{L^p(D_{\rho r})}\leq C\|\nabla^\ell u\|_{H^{1/2,2}(D_{\rho r})}.
	\end{equation}
Then we may apply the complex interpolation and the square function estimate to have
\begin{equation}\label{3.8}
		\|\nabla^\ell u\|_{L^p(D_{\rho r})}\leq C\|(\nabla^\ell u)^*\|_{L^2(\partial D_{\rho r})}\leq C\|\nabla^\ell u\|_{L^2(\partial D_{\rho r})},
	\end{equation}
where we have used the $L^2$ estimate. Finally, an integration argument yields the desired estimate.
\end{remark}

\subsection{Proof of Theorem \ref{main-thm-4}}
We are ready to give the proof of Theorem \ref{main-thm-4}
\begin{proof}
In the case $\ell\geq 3$, estimate \eqref{WWRH} holds for some $\lambda-2>d-2\ell$ if $d=2\ell+1$ or $2\ell+2$ \cite{Mazya-Donchev-1983}. And the ranges of $p<2$ follows by a duality argument, we refer the reader to \cite{Geng-2012} with slight modification.
\end{proof}

\section{Biharmonic equations}
In this section, we give the proof of Theorem \ref{main-thm-5}. The key ingredient is to show some weighted positivity on Lipschitz and convex domains for biharmonic equations. We should point out here that the approach we used here is inspired by the work of Mazya \cite{Mazya-1979} and further developed by Shen \cite{Shen-2006-JGA}. Recall that $u_i$, $u_{ij}$ denote $\frac{\partial u}{\partial x_i}$, $\frac{\partial^2u}{\partial x_i\partial x_j}$ respectively. Throughout this section, we will always use the summation convention that the repeated indices are summed from $1$ to $d$ and denote the complement of $\overline{\Omega}$ by ${\overline{\Omega}}^c$.

In many of the arguments that appear in the following sections, it will be necessary
to approximate a given Lipschitz domain $\Omega$ by a sequence of $C^\infty$ domains $\Omega_j$. When dealing with estimate on the approximating domain $\Omega_j$ with constant depending on the Lipschitz character of $\Omega_j$, we want the estimates to be extended to $\Omega$ by a limit argument. To show this, we recall an approximation theorem due to Verchota \cite{Verchota-1984}.
\begin{thm}\label{approxi}
Let $\Omega\subset \mathbb{R}^d$ be a bounded Lipschitz domain. Then there exists an
increasing sequence of $C^\infty$ domains $\Omega_j$, $j = 1, 2, . . .$, with the following properties.
\begin{enumerate}

\item
For all $j$, $\overline{\Omega}_j\subset \Omega$.
\item

There exist a sequence of homeomorphisms $\Lambda_j :\partial\Omega \to \partial\Omega_j $ such that $\sup_{P\in\partial\Omega} |P-\Lambda_j(P)| \to 0$ as $j\to\infty$ and $\Lambda_j\in \Gamma_a(P)$ for all $j$ and all $p\in \partial\Omega_j $, where
$\Gamma_a(P)$ is a family of nontangential approach regions for $\Omega$.

\item
There exists $h\in C_0^\infty(\mathbb{R}^d,\mathbb{R}^d)$ and $c>0$ such that
$$\langle h(P), N(P)\rangle \geq c$$ for all $P\in \partial\Omega_j$ and all $j$.

\item
There exists a finite covering of $\partial\Omega$ by coordinate cylinders so that for each
coordinate cylinder $(Z,\psi)$ in the covering, $10Z\cap \partial\Omega_j$ is given by the graph of a $C^\infty$ function $\psi_j$
$$
10Z\cap \partial\Omega_j=10Z\cap \{(x^\prime, \psi_j(x^\prime): x^\prime\in \mathbb{R}^{d-1})\}
$$
Furthermore, one has $\psi_j\to \psi$ uniformly, $\nabla \psi_j\to \nabla \psi$ a.e. as $j\to \infty$, and $\|\nabla \psi_j\|_\infty\leq \|\nabla \psi\|_\infty$.
\end{enumerate}

\end{thm}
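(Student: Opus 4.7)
The plan is to build $\Omega_j$ by mollifying the local defining functions $\psi_k$ from the given covering of $\partial\Omega$ by coordinate cylinders $(Z_k, \psi_k)$, then shifting the resulting smooth graphs slightly inward and gluing via a partition of unity. Let $\eta_\e$ be a standard mollifier on $\mathbb{R}^{d-1}$ and set $\psi_k^{(j)}(x') := (\eta_{\e_j}\ast \psi_k)(x') + c_j$ with $\e_j\downarrow 0$ and a positive shift $c_j\downarrow 0$ chosen so that $c_j$ dominates the uniform error $\|\psi_k - \eta_{\e_j}\ast\psi_k\|_\infty$. Standard properties of mollification give that each $\psi_k^{(j)}$ is $C^\infty$, $\psi_k^{(j)}\to \psi_k$ uniformly, $\nabla\psi_k^{(j)}\to \nabla\psi_k$ almost everywhere, and $\|\nabla(\eta_{\e_j}\ast \psi_k)\|_\infty\leq \|\nabla\psi_k\|_\infty$ by Jensen's inequality; these are exactly the ingredients of property (4). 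The shift $c_j > \|\psi_k - \eta_{\e_j}\ast\psi_k\|_\infty$ forces the graph of $\psi_k^{(j)}$ to lie strictly above that of $\psi_k$ in the local frame, so the locally defined smoothed subregion sits compactly inside $\Omega$; choosing the $c_j$ to decrease sufficiently fast yields $\overline{\Omega_j}\subset \Omega_{j+1}\subset \Omega$, which is property (1). I would globalize via a smooth partition of unity $\{\chi_k\}$ subordinate to the covering, producing a $C^\infty$ defining function $\Psi_j$ in a tubular neighborhood of $\partial\Omega$, and set $\Omega_j := \{\Psi_j<0\}$.

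For property (3), I would construct $h$ by gluing local candidates. In each coordinate cylinder, the pullback of $-e_d$ under the rotation defining the cylinder makes a uniformly positive inner product with the outward unit normal of any Lipschitz graph of slope at most $M$, namely at least $1/\sqrt{1+M^2}$. Assembling these local fields with a partition of unity and cutting off outside a neighborhood of $\partial\Omega$ produces $h\in C_0^\infty(\mathbb{R}^d,\mathbb{R}^d)$ with $\langle h, N\rangle \geq c$ on $\partial\Omega$. The same lower bound persists for every $\Omega_j$ because the smoothed graphs obey the same slope bound $M$, so their outward normals are uniformly close in the patches where $h$ is supported, and by continuity one may absorb the small error into a slightly smaller constant $c>0$ independent of $j$.

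The main obstacle is property (2), the simultaneous homeomorphism and nontangential approach statement. I would define $\Lambda_j(P)$ as the first intersection with $\partial\Omega_j$ of the integral curve of $h$ starting at $P\in\partial\Omega$. Existence and uniqueness of this intersection, and hence the bijective continuity (in both directions) needed for $\Lambda_j$ to be a homeomorphism, follow from the implicit function theorem applied to $\Psi_j$ along the $h$-flow, using the uniform transversality $\langle h, N\rangle\geq c$. Uniform convergence $\sup_{P\in\partial\Omega} |P-\Lambda_j(P)|\to 0$ is immediate because the $h$-travel time from $P$ to $\partial\Omega_j$ is controlled by the vertical gap between the two boundaries, which is $O(c_j + \|\psi_k - \eta_{\e_j}\ast\psi_k\|_\infty)\to 0$.

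The delicate point is choosing the aperture $a$ of $\Gamma_a(P)$ so that $\Lambda_j(P)\in \Gamma_a(P)$ for every $j$ simultaneously. Since $\Lambda_j(P) - P$ is tangent to the $h$-integral curve through $P$, it suffices to arrange that every vector in the cone generated by $h$ near $\partial\Omega$ lies strictly inside the nontangential cone at each boundary point. This can be done because the Lipschitz slope bound $M$ leaves a positive margin between the tangential directions of $\partial\Omega$ and the pullbacks of $-e_d$ used to build $h$: concretely, I would fix the cylinder aperture $a$ once and for all so that the conic opening is strictly wider than the maximal angle between $h$ and the inward normal across $\partial\Omega$. Once that margin is fixed, the same aperture $a$ works for all $j$, and the aggregate sequence $\{\Omega_j\}$ satisfies all four conclusions of the theorem.
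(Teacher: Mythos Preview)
The paper does not prove this theorem at all: it is stated as a quotation of Verchota's approximation theorem and attributed to \cite{Verchota-1984}, with no argument given. So there is no ``paper's own proof'' to compare your attempt against; the author simply imports the result as a black box.

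That said, your sketch follows the standard strategy behind Verchota's construction (mollify the local graphs, shift inward, produce a transversal vector field, and define $\Lambda_j$ by flowing along it), and the ideas for properties~(1), (3), and the nontangential-containment part of~(2) are sound. The one place where your outline is genuinely loose is the globalization step for property~(4). Writing $\Omega_j=\{\Psi_j<0\}$ for a partition-of-unity combination $\Psi_j=\sum_k\chi_k\Psi_j^{(k)}$ does not by itself guarantee that $10Z\cap\partial\Omega_j$ is again the graph of a single function $\psi_j$ over the \emph{same} coordinate frame, nor that $\|\nabla\psi_j\|_\infty\le\|\nabla\psi\|_\infty$: the derivatives of the cutoff functions $\chi_k$ enter and can spoil the slope bound. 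In Verchota's argument this is handled more carefully, essentially by arranging the mollification so that the local graphs patch compatibly without needing a partition of unity at the level of the defining functions (one exploits that in an overlap region the two graph descriptions of the \emph{same} hypersurface $\partial\Omega$ are related by a smooth change of variables, and mollification commutes with this up to controllable errors). If you want a self-contained proof you will need to address that point; otherwise, citing Verchota as the paper does is entirely appropriate.
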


\subsection{Weighted positivity of $\Delta^2$ on Lipschitz domains}

The following Lemma was stated in \cite{Shen-2006-JGA} without proof. We give a proof here for sake of the convenience.
\begin{lemma}\label{lemma-4.3}
Let $\Omega$ be a bounded Lipschitz domain in $\mathbb{R}^d$, $d\geq 5$. Suppose that $u\in C^2(\bar{\Omega})$ and $u=0$, $\nabla u=0$ on $\partial\Omega$. Let $y\in {\overline{\Omega}}^c$ be fixed. Then for any $0<\sigma\leq d-4$ and $d^2+2d\sigma-7\sigma^2-8\sigma>0$, we have
\begin{align}\label{estimate-4.3}
\int_\Omega |u|^2 |x-y|^{-4-\sigma}\leq C\int_\Omega \Delta u \Delta\big(\frac{u}{|x-y|^\sigma}\big).
\end{align}
\end{lemma}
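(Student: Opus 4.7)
The proof has two main ingredients: an explicit weighted identity for $\int_\Omega \Delta u\cdot\Delta(ur^{-\sigma})\,dx$ obtained by integration by parts, and a one-dimensional Hardy inequality along rays emanating from $y$. First I would use Theorem~\ref{approxi} to reduce to the case where $\partial\Omega$ is $C^\infty$; the constants will depend only on $d$ and $\sigma$, and thus pass cleanly to the limit. Writing $r=|x-y|$ and $\phi=r^{-\sigma}$, I record the elementary identities $\nabla\phi\cdot\nabla u = -\sigma r^{-\sigma-1}\partial_r u$ and $\Delta\phi=-\sigma(d-\sigma-2)r^{-\sigma-2}$, and expand
\begin{equation*}
\int_\Omega \Delta u\cdot\Delta(u\phi)\,dx
= \int_\Omega \phi|\Delta u|^2\,dx \;-\; 2\sigma\!\int_\Omega r^{-\sigma-1}\partial_r u\,\Delta u\,dx \;-\; \sigma(d-\sigma-2)\!\int_\Omega u\,\Delta u\, r^{-\sigma-2}\,dx.
\end{equation*}

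Next, since $u=\nabla u=0$ on $\partial\Omega$ and $y\notin\overline{\Omega}$ (so the weight is smooth on $\overline{\Omega}$), I would integrate each of these three pieces by parts, transferring derivatives off $\Delta u$ and onto the weight. This yields a quadratic form in the Hessian $u_{ij}$ and gradient $u_i$ with explicit $r^{-\sigma-k}$ coefficients. Decomposing the Hessian into its radial, mixed, and tangential components in the spherical frame based at $y$, and discarding the nonnegative tangential contributions, I expect to arrive at the identity singled out in the introduction,
\begin{equation*}
\int_\Omega \Delta u\cdot\Delta(ur^{-\sigma})\,dx \;\geq\; \tfrac{1}{4}\bigl(d^2+2d\sigma-7\sigma^2-8\sigma\bigr)\int_\Omega |\partial_r u|^2\, r^{-2-\sigma}\,dx,
\end{equation*}
which is strictly positive by the hypothesis $d^2+2d\sigma-7\sigma^2-8\sigma>0$.

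To convert this radial-derivative bound into the stated $L^2$ bound on $u$, I would use that for each $\omega\in S^{d-1}$ the set $I(\omega)=\{r>0:y+r\omega\in\Omega\}$ consists of open intervals whose endpoints lie on $\partial\Omega$; since $u=0$ there, the one-variable function $r\mapsto u(y+r\omega)$ vanishes at every endpoint of $I(\omega)$. The standard weighted one-dimensional Hardy inequality then gives
\begin{equation*}
\int_{I(\omega)} |u(y+r\omega)|^2\, r^{d-5-\sigma}\,dr \;\leq\; C(d,\sigma)\int_{I(\omega)} |\partial_r u(y+r\omega)|^2\, r^{d-3-\sigma}\,dr,
\end{equation*}
and integrating over $S^{d-1}$ in spherical coordinates centered at $y$ yields $\int_\Omega |u|^2 r^{-4-\sigma}\,dx\leq C\int_\Omega |\partial_r u|^2 r^{-2-\sigma}\,dx$. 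Combined with the previous step this proves \eqref{estimate-4.3}.

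The chief obstacle is the bookkeeping in the middle step: tracking the precise numerical coefficients through multiple integrations by parts and isolating exactly the combination $\tfrac{1}{4}(d^2+2d\sigma-7\sigma^2-8\sigma)$ requires the spherical splitting of the Hessian rather than a crude Cauchy--Schwarz bound, and this algebraic identity is the heart of Mazya's method in this setting. A secondary subtlety is the borderline case $\sigma=d-4$ of the one-dimensional Hardy step, where the exponent $d-3-\sigma=1$ is critical; here the additional boundary vanishing $\nabla u=0$ on $\partial\Omega$ supplies enough extra decay at each endpoint of $I(\omega)$ to avoid the logarithmic loss that would otherwise appear.
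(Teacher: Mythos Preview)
Your outline is in the right spirit, but it diverges from the paper's argument at two points, and one of them is a genuine gap.

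First, on the final step: the paper does \emph{not} pass from the radial-derivative bound to \eqref{estimate-4.3} via a one-dimensional Hardy inequality along rays. Carrying out your three-term expansion and integrating by parts exactly as you describe already produces the identity \eqref{estimate-4.1}, which contains the term $\tfrac12\sigma(\sigma+2)(d-\sigma-2)(d-\sigma-4)\int_\Omega |u|^2|x-y|^{-4-\sigma}$ with a nonnegative coefficient. The lemma then follows immediately once the remaining three terms of \eqref{estimate-4.1} are shown to be nonnegative. Your Hardy-on-rays step is a valid alternative for $\sigma<d-4$, but it is an unnecessary detour, and your discussion of the borderline $\sigma=d-4$ is beside the point: both routes degenerate there, and the application in Theorem~\ref{main-thm-5} only needs $\sigma<\sigma_d<d-4$ when $d\ge 8$.

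Second, and this is the gap: your proposed route to the key inequality \eqref{4.17} is not what the paper does, and your description misidentifies the mechanism. You write that the constant $\tfrac14(d^2+2d\sigma-7\sigma^2-8\sigma)$ ``requires the spherical splitting of the Hessian rather than a crude Cauchy--Schwarz bound.'' In fact the opposite is true. After the integrations by parts you are left with $\int_\Omega|\Delta u|^2 r^{-\sigma}$ plus first- and zeroth-order terms; there are no full-Hessian terms to split. The paper handles the second-order piece by first establishing the separate Rellich-type identity
\[
\int_\Omega \Delta u\,\partial_r u\, r^{-1-\sigma}\,dx
=\tfrac12(d-4-\sigma)\int_\Omega|\nabla u|^2 r^{-2-\sigma}\,dx+(\sigma+2)\int_\Omega|\partial_r u|^2 r^{-2-\sigma}\,dx,
\]
and then applying $|\partial_r u|\le|\nabla u|$ together with Cauchy--Schwarz to deduce $\int_\Omega|\Delta u|^2 r^{-\sigma}\ge\tfrac14(d+\sigma)^2\int_\Omega|\partial_r u|^2 r^{-2-\sigma}$. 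Substituting this lower bound into \eqref{estimate-4.1} is exactly what produces $\tfrac14(d+\sigma)^2+2\sigma-2\sigma(\sigma+2)=\tfrac14(d^2+2d\sigma-7\sigma^2-8\sigma)$. So the sharp constant comes \emph{from} a Cauchy--Schwarz step on an auxiliary Rellich identity, and you have not indicated any alternative mechanism that would recover it.
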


\begin{proof}
It follows from integration by part that
\begin{align}\label{4.6}
\int_\Omega u_{kk}  \big(\frac{\partial u}{\partial |x-y|}\big)|x-y|^{-1-\sigma}&=\int_\Omega u_{kk} \frac{\partial u}{\partial x_i}(x_i-y_i)|x-y|^{-2-\sigma}\nonumber
\\&=-\int_\Omega u_k \bigg(\frac{\partial u}{\partial x_i}(x_i-y_i)|x-y|^{-2-\sigma}\bigg)_k\nonumber
\\&=-\int_\Omega u_k u_{ki}(x_i-y_i)|x-y|^{-2-\sigma}-\int_\Omega u_k u_i\bigg(\frac{x_i-y_i}{|x-y|^{2+\sigma}}\bigg)_k.
\end{align}

Moreover, integration by parts again leads to
\begin{align}\label{4.7}
-\int_\Omega u_k u_{ki}(x_i-y_i)|x-y|^{-2-\sigma}&=\int_\Omega u_k u_{ki}(x_i-y_i)|x-y|^{-2-\sigma}+\int_\Omega u_k u_k\bigg(\frac{x_i-y_i}{|x-y|^{2+\sigma}}\bigg)_i\nonumber
\\&=\int_\Omega u_k u_{ki}(x_i-y_i)|x-y|^{-2-\sigma}+\int_\Omega |\nabla u|^2\bigg(\frac{x_i-y_i}{|x-y|^{2+\sigma}}\bigg)_i,
\end{align}
that is
\begin{align}\label{4.8}
-\int_\Omega u_k u_{ki}(x_i-y_i)|x-y|^{-2-\sigma}=\frac{1}{2}\int_\Omega |\nabla u|^2\bigg(\frac{x_i-y_i}{|x-y|^{2+\sigma}}\bigg)_i
\end{align}
and direct calculation shows that
\begin{align}\label{4.9}
\bigg(\frac{x_i-y_i}{|x-y|^{2+\sigma}}\bigg)_i=(d-\sigma-2)|x-y|^{-\sigma-2}
\end{align}
and
\begin{align}\label{4.10}
\bigg(\frac{x_i-y_i}{|x-y|^{2+\sigma}}\bigg)_j=\delta_{ij}|x-y|^{-\sigma-2}-(\sigma+2)(x_i-y_i)(x_j-y_j)|x-y|^{-\sigma-4}.
\end{align}
Then we have
\begin{align}\label{4.11}
\int_\Omega u_i u_j \bigg(\frac{x_i-y_i}{|x-y|^{2+\sigma}}\bigg)_j=&\int_\Omega u_i u_j \delta_{ij} |x-y|^{-2-\sigma}\nonumber
\\&-(\sigma+2)\int_\Omega u_i u_j (x_i-y_i)(x_j-y_j) |x-y|^{-4-\sigma}.
\end{align}
Observe that
$$
u_i(x_i-y_i)=|x-y|\frac{\partial u}{\partial |x-y|}.
$$
This, together with \eqref{4.6}, \eqref{4.8} and \eqref{4.11}, gives
\begin{align}\label{4.12}
\int_\Omega \Delta u \frac{\partial u}{\partial |x-y|}|x-y|^{-1-\sigma}=&\frac{1}{2}(d-4-\sigma)\int_\Omega |\nabla u|^2 |x-y|^{-2-\sigma}\nonumber
\\&+(\sigma+2)\int_\Omega \bigg|\frac{\partial u}{\partial |x-y|}\bigg|^2 |x-y|^{-2-\sigma}.
\end{align}

Next, observe that $\big|\frac{\partial u}{\partial |x-y|}\big|\leq |\nabla u|$, so by \eqref{4.12} we have
\begin{align}\label{4.13}
(\frac{1}{2}(d-4-\sigma)+\sigma+2)\int_\Omega \bigg|\frac{\partial u}{\partial |x-y|}\bigg|^2|x-y|^{-2-\sigma}\leq \int_\Omega \Delta u \frac{\partial u}{\partial |x-y|}|x-y|^{-1-\sigma}
\end{align}
and splitting $\sigma+1=\sigma/2+\sigma/2+1$ and using H\"older's inequality we have
\begin{align}\label{4.14}
\frac{1}{2}(d+\sigma)\int_\Omega \bigg|\frac{\partial u}{\partial |x-y|}\bigg|^2|x-y|^{-2-\sigma}\leq \bigg(\int_\Omega| \Delta u|^2|x-y|^{-\sigma}\bigg)^{1/2}\bigg(\int_\Omega\bigg|\frac{\partial u}{\partial |x-y|}\bigg|^2|x-y|^{-2-\sigma}\bigg)^{1/2},
\end{align}
which is
\begin{align}\label{4.15}
\frac{1}{4}(d+\sigma)^2\int_\Omega \bigg|\frac{\partial u}{\partial |x-y|}\bigg|^2|x-y|^{-2-\sigma}\leq \int_\Omega| \Delta u|^2|x-y|^{-\sigma}.
\end{align}

To proceed, we claim that
\begin{align}\label{estimate-4.1}
\int_\Omega u_{ii} (u |x-y|^{-\sigma})_{ii}dx=&\int_\Omega |\Delta u|^2|x-y|^{-\sigma}dx+2\sigma\int_\Omega |\nabla u|^2|x-y|^{-2-\sigma}dx\nonumber
\\&-2\sigma(\sigma+2)\int_\Omega \left|\frac{\partial u}{\partial |x-y|}\right|^2|x-y|^{-2-\sigma}dx\nonumber
\\&+\frac{1}{2}\sigma(\sigma+2)(d-\sigma-2)(d-\sigma-4)\int_\Omega |u|^2|x-y|^{-4-\sigma}.
\end{align}
Then it follows from  \eqref{4.15} and the fact $\big|\frac{\partial u}{\partial |x-y|}\big|\leq |\nabla u|$ as well as claim \eqref{estimate-4.1} that
\begin{align}\label{4.16}
\bigg(\frac{1}{4}(d+\sigma)^2+2\sigma-2\sigma(\sigma+2)\bigg)\int_\Omega \bigg|\frac{\partial u}{\partial |x-y|}\bigg|^2|x-y|^{-2-\sigma}\leq \int_\Omega \Delta u \Delta (u|x-y|^{-\sigma}),
\end{align}
i.e.,
\begin{align}\label{4.17}
\frac{1}{4}\bigg(d^2+2d\sigma-7\sigma^2-8\sigma\bigg)\int_\Omega \bigg|\frac{\partial u}{\partial |x-y|}\bigg|^2|x-y|^{-2-\sigma}\leq \int_\Omega \Delta u \Delta (u|x-y|^{-\sigma}).
\end{align}

Utilize the assumption $d^2+2d\sigma-7\sigma^2-8\sigma>0$, we have
 $2\sigma(\sigma+1)\leq \frac{1}{4}(d+\sigma)^2$. With these, together with claim \eqref{estimate-4.1} again,
it is easy to see that the summation of the first three term in the r.h.s of claim \eqref{estimate-4.1} is nonnegative. Hence we obtain that
\begin{align}\label{4.19}
\int_\Omega |u|^2 |x-y|^{-4-\sigma}dx\leq C\int_\Omega \Delta u \Delta\big(\frac{u}{|x-y|^\sigma}\big),
\end{align}
which is the desired estimate \eqref{estimate-4.3}.

Finally, it suffices for us to give the proof of claim \eqref{estimate-4.1}.
Direct computation shows that
\begin{align}\label{4.2}
(u |x-y|^{-\sigma})_{ij}=u_{ij}|x-y|^{-\sigma}+u_i (|x-y|^{-\sigma})_j+u_j (|x-y|^{-\sigma})_i+ u(|x-y|^{-\sigma})_{ij}.
\end{align}
Note that $u=0, \nabla u=0$ on $\partial\Omega$. Then it follows from integration by part we obtain that
\begin{align}\label{4.3}
2\int_\Omega \Delta u u_j (|x-y|^{-\sigma})_j&=-2\int_\Omega u_{i} \big[u_j (|x-y|^{-\sigma})_j\big]_i\\\nonumber
&=-2\int_\Omega u_{i}u_{ij} (|x-y|^{-\sigma})_j-2\int_\Omega u_i u_j \big[|x-y|^{-\sigma}\big]_{ij}.
\end{align}
Use integration by part again yields that
\begin{align}\label{4.4}
-2\int_\Omega u_{i}u_{ij} (|x-y|^{-\sigma})_j=\int_\Omega  |\nabla u|^2\Delta (|x-y|^{-\sigma}).
\end{align}
By the same argument we have
\begin{align}\label{4.5}
\int_\Omega u\Delta u \Delta (|x-y|^{-\sigma})&=-\int_\Omega|\nabla u|^2 \Delta (|x-y|^{-\sigma})+\frac{1}{2}\int_\Omega |u|^2\Delta^2 (|x-y|^{-\sigma}).
\end{align}
In view of \eqref{4.2}-\eqref{4.5}, together with a simple calculation
\begin{align}\label{5.1.5.2-9}
&( |x-y|^{-\sigma})_{ij}=-\sigma|x-y|^{-\sigma-2}\delta_{ij}+\sigma(\sigma+2)(x_i-y_i)(x_j-y_j)|x-y|^{-\sigma-4},\nonumber
\\&\Delta^2(|x-y|^{-\sigma})=\sigma(\sigma+2)(d-2-\sigma)(d-4-\sigma)|x-y|^{-\sigma-4},
\end{align}
lead to the claim \eqref{estimate-4.1}. Thus we complete the proof.
\end{proof}

\begin{lemma}\label{lemma-4.2}
Suppose that $u\in C^2(\bar{\Omega})$ and $u=0$, $\nabla u=0$ on $\partial\Omega$. Let $y\in {\overline{\Omega}}^c$ be fixed. Then for any $0<\sigma\leq d-4$ and $d^2+2d\sigma-7\sigma^2-8\sigma>0$, we have
\begin{align}\label{estimate-4.2}
\int_\Omega |\nabla u|^2 |x-y|^{-2-\sigma}dx\leq C\int_\Omega \Delta u \Delta\big(\frac{u}{|x-y|^\sigma}\big).
\end{align}
\end{lemma}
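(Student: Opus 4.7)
The plan is to revisit the pointwise identity \eqref{estimate-4.1} derived in the proof of Lemma~\ref{lemma-4.3} and extract more from it by splitting $|\nabla u|^2$ into its radial and spherical components with respect to the pole $y$. Writing $|\nabla u|^2 = \big|\tfrac{\partial u}{\partial |x-y|}\big|^2 + |\nabla_T u|^2$, where $\nabla_T u$ denotes the part of $\nabla u$ tangent to the spheres centred at $y$, the second and third terms on the right-hand side of \eqref{estimate-4.1} combine as
$$
2\sigma\int_\Omega|\nabla u|^2|x-y|^{-2-\sigma}\,dx - 2\sigma(\sigma+2)\int_\Omega\Big|\tfrac{\partial u}{\partial |x-y|}\Big|^2|x-y|^{-2-\sigma}\,dx = 2\sigma\int_\Omega|\nabla_T u|^2|x-y|^{-2-\sigma}\,dx - 2\sigma(\sigma+1)\int_\Omega\Big|\tfrac{\partial u}{\partial |x-y|}\Big|^2|x-y|^{-2-\sigma}\,dx.
$$
Since $0<\sigma\le d-4$, the last (nonnegative) term on the right of \eqref{estimate-4.1} can simply be dropped, leaving the lower bound
$$
\int_\Omega\Delta u\,\Delta(u|x-y|^{-\sigma})\,dx \ge \int_\Omega|\Delta u|^2|x-y|^{-\sigma}\,dx + 2\sigma\int_\Omega|\nabla_T u|^2|x-y|^{-2-\sigma}\,dx - 2\sigma(\sigma+1)\int_\Omega\Big|\tfrac{\partial u}{\partial |x-y|}\Big|^2|x-y|^{-2-\sigma}\,dx.
$$

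Next I would apply the Hardy-type estimate \eqref{4.15} to absorb the last, negative term into $\int_\Omega|\Delta u|^2|x-y|^{-\sigma}\,dx$. Doing so produces
$$
\int_\Omega\Delta u\,\Delta(u|x-y|^{-\sigma})\,dx \ge \frac{d^2+2d\sigma-7\sigma^2-8\sigma}{(d+\sigma)^2}\int_\Omega|\Delta u|^2|x-y|^{-\sigma}\,dx + 2\sigma\int_\Omega|\nabla_T u|^2|x-y|^{-2-\sigma}\,dx,
$$
where both coefficients on the right are strictly positive precisely by the hypothesis $d^2+2d\sigma-7\sigma^2-8\sigma>0$. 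This is the delicate point of the argument and the main reason for the radial/tangential split: if one instead bounded $|\nabla u|^2\ge\big|\tfrac{\partial u}{\partial |x-y|}\big|^2$ and merged the two radial terms as in the proof of Lemma~\ref{lemma-4.3}, the resulting coefficient would force the strictly stronger condition $d^2+2d\sigma-7\sigma^2-16\sigma>0$, which is not available.

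From the two nonnegative terms one immediately reads off $\int_\Omega|\Delta u|^2|x-y|^{-\sigma}\,dx\le C\int_\Omega\Delta u\,\Delta(u|x-y|^{-\sigma})\,dx$ and $\int_\Omega|\nabla_T u|^2|x-y|^{-2-\sigma}\,dx\le C\int_\Omega\Delta u\,\Delta(u|x-y|^{-\sigma})\,dx$; feeding the former back into \eqref{4.15} gives the same bound for $\int_\Omega\big|\tfrac{\partial u}{\partial |x-y|}\big|^2|x-y|^{-2-\sigma}\,dx$. Adding the radial and tangential contributions via $|\nabla u|^2=\big|\tfrac{\partial u}{\partial |x-y|}\big|^2+|\nabla_T u|^2$ yields the desired estimate \eqref{estimate-4.2}. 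No new identities beyond those already established for Lemma~\ref{lemma-4.3} are needed; the work lies entirely in the improved bookkeeping of the gradient's spherical decomposition, which is the principal obstacle one has to overcome.
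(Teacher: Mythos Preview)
Your argument is correct and is precisely in the spirit of the paper's terse ``similar to Lemma~\ref{lemma-4.3}'': it recycles the identity \eqref{estimate-4.1} together with the Hardy-type bound \eqref{4.15}, and your radial/tangential decomposition is just the pointwise form of the paper's use of $\big|\tfrac{\partial u}{\partial|x-y|}\big|\le|\nabla u|$ in the proof of Lemma~\ref{lemma-4.3}. One small correction to your aside: the route without the explicit split does not actually force the stronger condition $d^2+2d\sigma-7\sigma^2-16\sigma>0$, since one can instead first invoke \eqref{4.17} (already obtained under the stated hypothesis) to bound $\int_\Omega\big|\tfrac{\partial u}{\partial|x-y|}\big|^2|x-y|^{-2-\sigma}\,dx$ by the right-hand side and then return to \eqref{estimate-4.1}, drop the nonnegative $|\Delta u|^2$ and $|u|^2$ terms, and isolate the full $|\nabla u|^2$ integral; but this is a matter of organization and does not affect the validity of your proof.
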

\begin{proof}
The proof is similar to that of Lemma \ref{lemma-4.3}.
\end{proof}

\subsection{Proof of Theorem \ref{main-thm-5}}
We are ready to give the proof of Theorem \ref{main-thm-5}.

\begin{proof}[Proof of Theorem \ref{main-thm-5}]
 Armed with Theorem \ref{approxi}, we may approximate $\Omega$ by a sequence of smooth domains $\{\Omega_j\}$ with smooth boundaries and let $u_j$ be the solution to the Dirichlet problem \eqref{DP} on $\Omega_j$ with $F_j\in W^{-\ell,p}$. And in the following argument, we will not make effort to distinguish $u_j, \Omega_j$ with $u,\Omega$ correspondingly.

 Note that in the case $\ell=2$, estimate \eqref{WWRH} holds for some $\lambda>d-2\ell+2$ if $d=5,6$ or $7$ (see\cite{Mazya-1979}). Also note that in the case of $d=2,3$ and $4$, the ranges of $p$ follows from Theorem \ref{main-thm-3} directly. Then in view of Theorem \ref{main-thm-2}, it suffices for us to show that if $d\geq 8$, then
estimate \eqref{WWRH} holds for any $\lambda<\lambda_d=\sigma_d+2$ with
$$
\sigma_d=\frac{d-4+2\sqrt{2(d^2-d+2)}}{7}.
$$

Let $0<R<R_0$. For any fixed $y\in {\overline{\Omega}}^c$, by Lemma \ref{lemma-4.3}, we have
\begin{align}\label{4.19}
\int_\Omega |u|^2 |x-y|^{-4-\sigma}dx\leq C\int_\Omega \Delta u \Delta\big(\frac{u}{|x-y|^\sigma}\big).
\end{align}
Next, one may note that \eqref{4.19} also holds for $y\in\partial\Omega$ due to the Lebesgue Dominated Convergence Theorem. Assume that $v$ is a weak solution of $\Delta^2 v=0$ in $\Omega$ and $v=\frac{\partial v}{\partial N}=0$ on $\Delta_R$. Set $u=v\varphi\in W_0^{2,2}(\overline{\Omega})$ where $\varphi$ is a cut-off function such that $supp(\varphi)\subset B(y, 2r)$ with $r<R/4$ satisfying
$\varphi\in C_0^\infty(\mathbb{R}^d)$
	\begin{equation}
		\begin{cases}
			\varphi=1 \quad \text{ in } B(y, r),\\
			\varphi=0 \quad \text{ outside } B(y, 2r),\\
			|\nabla^k\varphi|\le C/r^k.
		\end{cases}
	\end{equation}
Then by \eqref{4.19} we obtain
\begin{align}\label{4.20}
\int_\Omega |v\varphi|^2 |x-y|^{-4-\sigma}dx\leq C\int_\Omega \Delta (v\varphi) \Delta\big(\frac{v\varphi}{|x-y|^\sigma}\big).
\end{align}
Notice that $\supp {|\nabla \varphi|}\subset \{x\in\mathbb{R}^d: r\leq |x-y|\leq 2r\}$. This, together with a limiting argument and the boundary Cacciopoli's inequality \cite{Shen-2006-JGA}
\begin{align}\label{Cacciopoli}
\frac{1}{r^2}\int_{D_r} |\nabla u|^2 dx+\int_{D_r} |\nabla^2 u|^2dx\leq \frac{C}{r^4}\int_{D_{2r}\backslash D_{r}} |u|^2dx,
\end{align}
leads to that
\begin{align}\label{4.21}
\int_{D_{r}} |v|^2 dx\leq C\bigg(\frac{r}{R}\bigg)^{\sigma+4+\delta}\int_{D_{R}} |v|^2dx.
\end{align}
Utilize Cacciopoli's inequality (with $\ell=1$)
\begin{align}\label{Cacciopoli-1}
\int_{D_{r}} |\nabla^\ell v|^2 dx\leq \frac{C}{r^2}\int_{D_{2r}} |\nabla^{\ell-1}v|^2dx
\end{align}
and Poincar\'e inequality again yields the desired estimate
\begin{align}\label{4.22}
\int_{D_{r/2}} |\nabla v|^2 dx\leq C\bigg(\frac{r}{R}\bigg)^{\sigma+2+\delta}\int_{D_R} |\nabla v|^2dx,
\end{align}
we omit the detail and refer the reader to \cite{Shen-2006-JGA}. Finally, we have established the desired estimate \eqref{WWRH} for $\lambda-2=\sigma+\delta=\lambda_d-2+\delta$, i.e., $\lambda=\lambda_d+\delta$.

We utilize Theorem \ref{main-thm-1} to show second part of Theorem \ref{main-thm-5}. To handle this,
it suffices for us to show that
estimate \eqref{WRH} holds for any $p>2$. Let $\Omega$ be convex domain in $\mathbb{R}^d, d\geq 2$ and $u$ a weak solution of $\Delta^2 u=0$ in $\Omega$ and $u=\frac{\partial u}{\partial N}=0$ on $\Delta_{5r}$.
It follows from Theorem 1.1 in \cite{Mazya-Mayboroda-2008} that
$$
\sup_{B(P,r)\cap\Omega} |\nabla^2 u|
\le \frac{C}{r^2} \left\{\frac{1}{r^d}
\int_{B(P,3r)\cap \Omega}|u|^2\, dx \right\}^{1/2}.
$$
This, together with $u=|\nabla u|=0$ on $B(P,5r)\cap \partial\Omega$
$P\in \partial\Omega$ and $0<r<r_0$.,
 gives that
\begin{equation}
\label{reverseHolder}
\sup_{B(P,r)\cap \Omega} |\nabla^2 u|
\le C \left\{ \frac{1}{r^{d}}
\int_{B(P,3r)\cap \partial\Omega}
|\nabla^2 u|^2\, d\sigma\right\}^{1/2}.
\end{equation}
This gives estimate \eqref{WRH} immediately. Thus we complete the proof.
\end{proof}

\section{Counterexample to Unique Solvability of the Dirichlet Porblem}
In this section, we will show that in bounded Lipscthiz domain $d>7$ and $p>5/2+\e$ that the Dirichlet problem fails to be solvable for biharmonic equation, and for $d>2\ell+2$ and a $p>2+\frac{1}{\ell}+\e$, the Dirichlet problem fails to be solvable for polyharmonic equation.

Our aim is to prove the following theorems.

\begin{thm}\label{Counterexample-2}
Given an integer $\ell\geq 3$ and a dimension $d, 4\leq d\leq 2\ell+2$ and a $p>2+\frac{1}{\ell}+\e$ there is a bounded Lipschitz domain $\Omega\subset\mathbb{R}^d$ so that
the polyharmonic Dirichlet problem in the sense of Theorem \ref{main-thm-4} can not be uniquely solved with data in $W^{-\ell,p}(\partial\Omega)$.
\end{thm}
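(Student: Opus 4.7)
The plan is to exhibit an explicit singular solution on a Lipschitz cone whose $W^{\ell,p}$ norm diverges precisely at the threshold $p=2+1/\ell$, while arising from a smooth right-hand side supported away from the singular point. This realises the classical recipe of Pipher--Verchota and Maz'ya--Donchev \cite{Mazya-Donchev-1983}. I will carry out the argument in the critical dimension $d=2\ell+2$, where the threshold matches the sharp upper endpoint recorded in Theorem \ref{main-thm-4}; the remaining dimensions $4\le d\le 2\ell+1$ are handled by the same construction after retuning the opening angle of the cone so as to relocate the relevant eigenvalue.

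First I would fix a circular cone $K\subset\mathbb{R}^d$ with vertex at the origin and a tunable opening angle, and separate variables in polar coordinates $x=r\omega$, $\omega\in S^{d-1}$. This reduces the homogeneous polyharmonic Dirichlet problem on $K$ to a spectral problem for an operator pencil $\mathfrak{A}(\lambda)$ on the spherical cap $K\cap S^{d-1}$: an exponent $\lambda\in\mathbb{C}$ is admissible precisely when $\mathfrak{A}(\lambda)$ has a nontrivial kernel, and each kernel element $\phi(\omega)$ gives rise to a homogeneous solution $u_0(x)=r^{\lambda}\phi(\omega)$ of $\Delta^{\ell}u_0=0$ in $K$ with $D^{\alpha}u_0=0$ on $\partial K$ for $|\alpha|\le\ell-1$. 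A direct scaling shows that a cut-off of $r^{\lambda}\phi$ has finite $W^{\ell,p}$ norm near the vertex iff $p(\lambda-\ell)+d>0$, so matching the critical exponent to $p_{\rm crit}=2+1/\ell$ singles out the target homogeneity
\[
\lambda_{*}=\ell-\frac{d\ell}{2\ell+1}=-\frac{\ell}{2\ell+1},
\]
which lies strictly above the $W^{\ell,2}$-borderline $\ell-d/2=-1$, so that $r^{\lambda_{*}}\phi$ still belongs to $W^{\ell,2}$ locally near the vertex. A monotonicity/intermediate-value argument for the spectrum of $\mathfrak{A}(\lambda)$ as the opening angle of $K$ varies, as carried out in \cite{Mazya-Donchev-1983}, then produces a specific cone on which $\lambda_{*}$ is an eigenvalue with a nontrivial $\phi$.

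With $K$, $\lambda_{*}$ and $\phi$ in hand, I would take a bounded Lipschitz domain $\Omega$ coinciding with $K$ on $B(0,r_0)$ and smooth outside $B(0,2r_0)$, fix a cutoff $\eta\in C_c^{\infty}(B(0,r_0))$ with $\eta\equiv 1$ near $0$, and set $u:=u_0\,\eta$. Then $u\in W^{\ell,2}_0(\Omega)$ satisfies all the homogeneous Dirichlet conditions of \eqref{DP}, while $F:=\Delta^{\ell}u$ is supported in $\supp{\nabla\eta}\Subset\Omega$ and thus lies in $C_c^{\infty}(\Omega)\subset W^{-\ell,p}(\Omega)$ for every $p$. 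By the scaling from the previous step, $u\notin W^{\ell,p}(\Omega)$ for any $p>2+1/\ell$. To finish, I would argue by contradiction: if \eqref{DP} were uniquely solvable in $W^{\ell,p}(\Omega)$ for some $p>2+1/\ell+\varepsilon$, then since $p>2$ any $W^{\ell,p}$-solution automatically lies in $W^{\ell,2}$, and the standard $L^2$ well-posedness forces it to coincide with our $u$; but this contradicts $u\notin W^{\ell,p}(\Omega)$.

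The hard part is the spectral step: certifying that $\lambda_{*}=-\ell/(2\ell+1)$ can in fact be realised as an eigenvalue of the operator pencil $\mathfrak{A}(\lambda)$ on a Lipschitz (non-smooth) cone. This requires a quantitative tracking of how the eigenvalues of $\mathfrak{A}(\lambda)$ move with the opening angle and an intermediate-value-type argument to hit $\lambda_{*}$ exactly; both ingredients are imported from the classical analysis of \cite{Mazya-Donchev-1983} and \cite{PV-1995-1}. Everything else reduces to routine bookkeeping with $|\nabla^{\ell}(r^{\lambda_{*}}\phi)|\sim r^{\lambda_{*}-\ell}$ and the $L^2$ theory already packaged into Theorem \ref{main-thm-4}.
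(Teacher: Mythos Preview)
Your strategy is the paper's: a homogeneous polyharmonic solution $r^\lambda\phi(\omega)$ on a cone, truncated by a cut-off to a bounded Lipschitz domain, with $L^2$ uniqueness forcing any putative $W^{\ell,p}$-solution to coincide with the singular one. The only substantive difference is the dimension in which the spectral step is executed. The paper works in $d=2\ell+1$, not $d=2\ell+2$: there Theorem~2.6 of \cite{PV-1995-1}---which is stated precisely for $2\le d\le 2\ell+1$---gives $\lambda(\varepsilon)\to\ell-\tfrac{d}{2}+\tfrac12=0$ on the exterior of a thin cone, so one never has to realise a prescribed eigenvalue; for any fixed $p>2+\tfrac1\ell$ one simply takes $\varepsilon$ small enough that $(\lambda(\varepsilon)-\ell)p+(2\ell+1)<0$. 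Your choice $d=2\ell+2$ forces you to manufacture the specific value $\lambda_*=-\ell/(2\ell+1)$, and the references you invoke do not quite deliver this: \cite{Mazya-Donchev-1983} supplies eigenvalue-\emph{free} strips (this is the input to the positive direction in Theorem~\ref{main-thm-4}), while the eigenvalue asymptotic in \cite{PV-1995-1} stops at $d=2\ell+1$.

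Two smaller points. First, you do not need to hit $\lambda_*$ exactly: the theorem allows the domain to depend on $p$, so it is enough that eigenvalues can be pushed below $\lambda_*+\delta$ for every $\delta>0$ while remaining above $\ell-\tfrac{d}{2}$. Second, your claim that $F=\Delta^\ell(u_0\eta)\in C_c^\infty(\Omega)$ is not correct, since $\supp\nabla\eta$ meets the lateral boundary of the cone and the Leibniz terms carrying $D^\alpha u_0$ with $|\alpha|\ge\ell$ need not vanish there. What is true---and all you need---is that $u_0$ is smooth up to $\partial K$ on the annulus (because $\phi$ solves an elliptic problem on a smooth spherical cap), so $F\in L^\infty(\Omega)\subset W^{-\ell,p}(\Omega)$ for every $p$, and the contradiction argument goes through unchanged.
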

\begin{proof}
In view of Theorem \ref{main-thm-4}, we known that the polyharmonic Dirichlet problem $\Delta^\ell u=F$ in $\Omega$, $\ell\geq 3$ is uniquely solvable for $2-\frac{1}{\ell+1}-\e<p< 2+\frac{1}{\ell}+\e$ if $2\leq d\leq  2\ell+2$. To deal with this, it suffices for us to show that the optimal ranges of $p$ is
$$
2-\frac{1}{\ell+1}-\e<p< 2+\frac{1}{\ell}+\e~~ \text{if}~~ d=2\ell+1
$$
 for solution to the polyharmonic equation.

We follow \cite{MNP-1985}(see also \cite{PV-1995-1}) to show the sharpness of ranges of $p$. To handle this, let
$$
\Gamma_\e=\{x\in\mathbb{R}^d:(x_1^2+x_2^2+...+x_{d-1}^2)<-\e x_d\},~~\e>0
$$
and $\omega_\e=\mathbb{R}^d\backslash \Gamma_\e$. The failure of the $W^{\ell,p}$ solvability on Lipschitz domains $\Omega\subset \mathbb{R}^d$, $d>2\ell+1$ and $p> 5/2+\e $,
follows from the existence of a polyharmonic function $v(x)$ in the exterior of a cone $\Gamma_\e$, satisfying
\begin{align*}
&(a)~~~u(x)=|x|^\lambda \phi\bigg(\frac{x}{|x|}\bigg),~~~\lambda~~\text{is}~~\text{real},\\
&(b)~~~ \frac{\partial^{\ell-1}u }{\partial N^{\ell-1}}=0~~and~~D^\alpha u=0 ~~\text{for}~~ |\alpha|\leq \ell-2~~\text{on the lateral sides of the cone } \Gamma_\e,\\
&(c)~~ \text{If the aperture of the cone is small enough, then}~~ \lambda\text{ may be chosen to be less than $1$}.
\end{align*}
Notice that the existence of such a $u$ was established in \cite{MNP-1985} if $d>5$ and shown
in \cite{PV-1992} if $d=4$. We also notice that in the case of biharmonic equation, the existence of such a $u$ was established in \cite{MP-1981}.

Let $\sigma=(\sigma_1,...,\sigma_d)$ denote points on the unit sphere $\mathbb{S}^{d-1}$. Upon applying $\Delta^\ell$ to $v(x)$ leads to the eigenvalue problems in spherical domains $\Omega_\e=\mathbb{S}^{d-1}\cap \omega_\e$
\begin{equation}\label{eigenvalue}
	\left\{
	\aligned
	\prod_{j=1}^\ell (\Lambda_j+\Delta_\sigma)u  & =0 & \quad & \text{ in } \Omega_\e,\\
	D^\alpha_\sigma u & =0 & \quad & \text{ on } \partial\Omega_\e, ~~\text{for}~~|\alpha|\leq \ell-1,
	\endaligned
	\right.
\end{equation}
where $\Delta_\sigma$ denote the spherical Laplacian and $\Lambda_j=(\lambda-2j+2)(\lambda-2j+d)$ for $j=1,...,\ell.$

Next, it follows from Theorem $2.6$ in \cite{PV-1995-1} that for any given $\ell\geq 2$ and $2\leq d\leq 2\ell+1$, the eigenvalue problem \eqref{eigenvalue} has nontrivial solution $u\in C_0^\infty(\bar{\Omega}_\e)$ with
 $$
 \lambda(\e)\to \ell-\frac{d}{2}+\frac{1}{2}~~\text{as}~~\e\to 0.
 $$
Thus we know that in the case $d=2\ell+1$, then $\lambda(\e)\to 0$ as $\e\to 0$. This, together with a simple calculation shows that
$$
(\lambda-\ell)p+2\ell+1>0
$$
which yields that
$$
p<2+\frac{1}{\ell}.
$$
This, together with the self-improving property of the weak reverse H\"older inequality and a duality argument, yields that the $W^{\ell,p}$ solvability on Lipschitz domains $\Omega\subset \mathbb{R}^d$ is sharp for $2-\frac{1}{\ell+1}-\e<p< 2+\frac{1}{\ell}+\e$ if $2\leq d\leq 2\ell+2$.
Thus we complete the proof.
\end{proof}

\begin{thm}\label{Counterexample-1}
Given any dimension $d > 7$ and a $p>5/2+\e$ there is a bounded Lipschitz domain $\Omega\subset\mathbb{R}^d$ so that
the biharmonic Dirichlet problem in the sense of Theorem \ref{main-thm-5} can not be uniquely solved with data in $W^{-\ell,p}(\partial\Omega)$.
\end{thm}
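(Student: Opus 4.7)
The plan is to adapt the strategy used for Theorem \ref{Counterexample-2} to the biharmonic case $\ell = 2$, relying on the classical construction of Mazya--Plamenevskii \cite{MP-1981} rather than \cite{MNP-1985} or \cite{PV-1995-1}. The structural template is identical: exhibit a biharmonic function with a homogeneous singularity on the exterior of a narrow cone, track the admissible exponent, and then localize to a bounded Lipschitz domain so the resulting solution fails to be in $W^{2,p}$ precisely when $p > 5/2$.

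Concretely, I would consider the conical domain $\Gamma_\e = \{x \in \mathbb{R}^d : x_1^2 + \cdots + x_{d-1}^2 < -\e x_d\}$ and its exterior $\omega_\e = \mathbb{R}^d \setminus \Gamma_\e$, and seek a biharmonic function in $\omega_\e$ of the form $u(x) = |x|^\lambda \phi(x/|x|)$ satisfying the Dirichlet conditions $u = \partial u/\partial N = 0$ on $\partial \Gamma_\e$. Applying $\Delta^2$ reduces the task to the factored spherical eigenvalue problem
\begin{equation*}
(\Lambda_1 + \Delta_\sigma)(\Lambda_2 + \Delta_\sigma)\phi = 0 \quad \text{on } \Omega_\e = \mathbb{S}^{d-1}\cap \omega_\e, \qquad \phi = \nabla_\sigma \phi = 0 \text{ on } \partial \Omega_\e,
\end{equation*}
with $\Lambda_j = (\lambda - 2j + 2)(\lambda - 2j + d)$ and $\Delta_\sigma$ the spherical Laplacian. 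The existence of a real eigenvalue $\lambda(\e)$ for the biharmonic case in dimensions $d > 7$ (with $\e$ sufficiently small) is the content of \cite{MP-1981}. Writing the condition for the singular contribution to escape $W^{2,p}$ near the origin, namely $\int_0^1 s^{(\lambda-2)p + d - 1}\,ds = +\infty$, one sees that failure at $p > 5/2$ amounts to $\lambda(\e) \le 2 - 2d/5$ in the limit $\e \to 0$, which is exactly the regime produced by the Mazya--Plamenevskii construction in high dimensions.

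I would then localize and transplant to a bounded Lipschitz domain: pick a cutoff $\eta \in C_c^\infty(\mathbb{R}^d)$ equal to $1$ near the origin and supported in a small ball, and set $v = \eta u$. Then $v \in W^{2,2}$ (since $\lambda > 2 - d/2$ once $\e$ is small), $\Delta^2 v$ is smooth away from the vertex and gives a well-defined element of $W^{-2,p}(\Omega)$ for every $p$, but $v \notin W^{2,p}$ for $p > 5/2 + \e'$. Choosing $\Omega \subset \mathbb{R}^d$ to be a bounded Lipschitz domain containing the origin whose boundary coincides with $\partial \Gamma_\e$ near the vertex and is smooth elsewhere, and modifying $v$ by a smooth correction so it vanishes (together with its normal derivative) on the smooth part of $\partial \Omega$, yields a weak solution of $\Delta^2 v = F \in W^{-2,p}(\Omega)$ that does not belong to $W^{2,p}(\Omega)$. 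As in Theorem \ref{Counterexample-2}, the self-improving property of the weak reverse Hölder inequality together with the duality argument then obstructs unique $W^{2,p}$-solvability for every $p > 5/2 + \e$.

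The main obstacle is the second step, namely securing the correct asymptotics of $\lambda(\e)$ in dimensions $d > 7$. For the polyharmonic counterexample, Pipher--Verchota \cite[Thm.~2.6]{PV-1995-1} delivered the clean limit $\lambda(\e) \to \ell - d/2 + 1/2$ in the range $d \le 2\ell + 1$; the biharmonic problem for $d \ge 8$ falls outside that range, and the spherical spectral analysis on the exterior of a thin cone is correspondingly more delicate. One must appeal to the precise spectral asymptotics derived in \cite{MP-1981} to certify that the relevant eigenvalue crosses the threshold $2 - 2d/5$ as the aperture shrinks, a step which is the heart of the argument and not available from the general polyharmonic framework of Theorem \ref{Counterexample-2}.
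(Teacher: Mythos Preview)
Your structural template is sound, but the paper proceeds quite differently and avoids precisely the obstacle you flag at the end. Rather than working directly in dimension $d>7$, the paper reduces the whole statement to the five-dimensional case: it argues that it suffices to show that the range $5/3-\e<p<5/2+\e$ is optimal when $d=5$, and then carries out the construction there. The point is that for $\ell=2$ the critical dimension $2\ell+1$ equals $5$, so Theorem~2.6 of \cite{PV-1995-1} applies verbatim and gives $\lambda(\e)\to 0$ as the cone shrinks; the threshold computation $(\lambda-2)p+5>0\Longleftrightarrow p<5/2$ is then immediate. The passage from $d=5$ to $d>7$ is implicit and standard: a five-dimensional counterexample domain may be thickened by a cylinder in the extra $d-5$ variables, and a biharmonic function independent of those variables remains biharmonic while the $L^p$-integrability of $\nabla^2 u$ is unchanged.

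By contrast, your direct approach in dimension $d>7$ requires an eigenvalue satisfying $\lambda(\e)\le 2-2d/5$, which for $d=8$ already means $\lambda<-6/5$ and becomes increasingly negative as $d$ grows. You are right that this falls outside the scope of \cite[Thm.~2.6]{PV-1995-1}, and your appeal to \cite{MP-1981} for ``precise spectral asymptotics'' in this regime is not substantiated: the standard Mazya--Plamenevskii analysis on the exterior of a thin cone produces eigenvalues perturbing from the full-space indices $\{0,2,2-d,4-d\}$, and it is a nontrivial matter to verify that the branch near $4-d$ (which would numerically suffice) actually carries a Dirichlet eigenfunction in the sense you need. The paper's reduction to $d=5$ sidesteps this entirely, so the ``heart of the argument'' you identify as the main obstacle simply does not arise.
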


\begin{proof}
The proof follows the same manner as in the proof of Theorem \ref{Counterexample-2} with slight modification. In view of Theorem \ref{main-thm-5}, we known that the the biharmonic Dirichlet problem $\Delta^2 u=F$ in $\Omega$ is uniquely solvable for $5/3-\e<p< 5/2+\e $ if $d=5,6,7$. Hence it suffices for us to show the optimal ranges of $p$ is
$$\frac{5}{3}-\e<p< \frac{5}{2}+\e ~~\text{if}~~d=5$$ for solution to biharmonic equation.

To do this, we follows from \cite{MP-1981} to construct the counterexample. Let $\Gamma_\e$ be the same as in Theorem \ref{Counterexample-2} and $u$ a biharmonic function $u(x)$ in the exterior of a cone $\Gamma_\e(0)$, with vertex at the origin satisfying
$u(x)=|x|^\lambda \phi\bigg(\frac{x}{|x|}\bigg),$
$u=0$ and $\frac{\partial u}{\partial N}=0 $ on the lateral sides of the cone $ \Gamma_\e(0).$
A bounded domain $\tilde{\Omega}$ may be constructed so that $\partial\tilde{\Omega}$ is smooth except at the
origin. Set
$$
B(0,1/2)\backslash \Gamma_\e(0)\subset \tilde{\Omega}\subset B(0,1)\backslash \Gamma_\e(0)
$$
with $\partial\tilde{\Omega}\cap\partial \Gamma(0)=\partial \Gamma_\e(0)\cap B(0,1/2)$.
By the smoothness of $\partial\tilde{\Omega}$ away from the origin and interior estimates, we have that $|\nabla u(x)|\in L^\infty(\partial\tilde{\Omega})$, thus $|\nabla^2 u|=O(|x|^{\lambda-2})$ near the origin. Next, it follows from Theorem $2.6$ in \cite{PV-1995-1}, we know that in the case $d=5$ and $\ell=2$, then $\lambda(\e)\to 0$ as $\e\to 0$. This, together with a simple calculation shows that
$$(\lambda-2)p+5>0$$
which yields that
$$p<\frac{5}{2}.$$
 This, together with the self-improving property of the weak reverse H\"older inequality and a duality argument, yields the failure of the $W^{\ell,p}$ solvability on Lipschitz domains $\Omega\subset \mathbb{R}^d$, $d>5$ and $5/3-\e<p< 5/2+\e$.
Thus we complete the proof.
\end{proof}

\medskip

\bigskip

\newpage

 \bibliographystyle{amsplain}

\bibliography{JGeng-2025}

\bigskip

\bigskip

\end{document}